\newcommand{\R}{\mathbb{R}}
\newcommand{\E}{\mathbb{E}}
\newcommand{\D}{\mathrm{d}}
\DeclareMathOperator{\vol}{V}
\DeclareMathOperator{\bd}{bd}
\DeclareMathOperator{\PP}{\mathbb{P}}
\newtheorem{theorem}{Theorem}[section]
\newtheorem{lemma}[theorem]{Lemma}
\numberwithin{equation}{section}
\title[Random ball-polytopes in smooth convex bodies]{Random ball-polytopes in smooth convex bodies}
\author{Ferenc Fodor}
\address{Department of Geometry, Bolyai Institute, University of Szeged, Aradi v\'ertan\'uk tere 1, 6720 Szeged, Hungary}
\email{fodorf@math.u-szeged.hu}
\thanks{This research was supported by grant TUDFO/47138-1/2019-ITM of the Ministry for Innovation and Technology, Hungary, and by Hungarian National 	Research, Development and Innovation Office NKFIH grant K 116451.
The author also wishes to thank the MTA Alfr\'ed R\'enyi Institute of Mathematics where part of this work was done while he was a visiting researcher. Thanks are also due to Viktor V\'\i gh for the helpful discussions and comments.}
\subjclass[2010]{Primary 52A22, Secondary 52A27, 60D05}
\begin{document}

\maketitle

\bibliographystyle{amsplain}

\begin{abstract}
We study approximations of smooth convex bodies by random ball-polytopes. We examine the following probability model: let $K\subset\R^d$
be a convex body such that $K$ slides freely in a ball
of radius $R>0$ and has $C^2$ smooth boundary. 
Let $x_1,\ldots, x_n$ be i.i.d. uniform random
points in $K$. For $r\geq R$, let $K^r_{(n)}$ denote the intersection of all radius $r$ closed balls
that contain $x_1,\ldots, x_n$. 
Then $K^r_{(n)}$ is a (uniform) random ball-polytope (of radius $r$) in $K$.
We study the asymptotic properties of the expectation of
the number of facets of $K_{(n)}^r$ as $n\to\infty$. While sufficiently round convex bodies behave in a similar way with respect to random approximation by ball-polytopes as to classical polytopes, an interesting phenomenon can be observed when a unit ball is approximated by unit radius random ball-polytopes: the expected number of facets approaches a finite limit as $n\to\infty$.

\end{abstract}

\section{Introduction and results}
In the theory of random polytopes, one of the oldest and probably most frequently investigated model is when one selects a sample of $n$ independent and identically distributed random points from a convex body chosen according to the uniform probability distribution. The convex hull of these random points is a random polytope contained in the body. A sequence of random polytopes obtained this way tends to the convex body as $n\to\infty$. Since the classical paper of R\'enyi and Sulanke \cite{RS63}, a large part of results concerning this probability model has been in the form of asymptotic formulae (as $n$ tends to infinity) about the behaviour of various geometric quantities of the random polytopes, such as, the number of $i$-dimensional faces, intrinsic volumes, etc. Here we do not give a detailed overview of this extensive topic, instead we refer to the surveys by B\'ar\'any \cite{B2008}, Hug \cite{H13}, Reitzner \cite{R10}, Schneider \cites{Sch04,Sch08, Sch14}, Schneider and Weil \cite{SW08}, Weil and Wieacker \cite{WW1993}.

In this paper we investigate a variant of this much studied model where polytopes are replaced by so-called ball-polytopes, that arise as intersections of congruent closed balls.

Our setting is the $d$-dimensional Euclidean space $\R^d$ with its usual inner product $\langle\cdot,\cdot\rangle$ and norm $|\cdot|$. We denote the origin by $o$. As common, points and vectors in $\R^d$ are not distinguished unless necessary. The Euclidean distance of two points is denoted by $d(\cdot,\cdot)$. The closed unit ball centred at $o$ is $B^d=\{x\in\R^d: |x|\leq 1\}$, the open unit ball is $\mathring{B}^d=\{x\in\R^d: |x|<1\}$, and the unit sphere is $S^{d-1}=\{x\in\R^d: |x|=1\}$. The boundary of a set $X\subset\R^d$ is denoted by $\bd X$. Thus, in particular, $S^{d-1}=\bd B^d$. We use $V(\cdot)$ for the $d$-dimensional Lebesgue measure or volume, and $\sigma_{d-1}(\cdot)$ for the spherical Lebesgue measure on $S^{d-1}$.  It is well-known that $\kappa_d:=V(B^d)=\pi^{d/2}/\Gamma(d/2+1)$, where $\Gamma$ denotes Euler's gamma function, cf. \cite{A64}, and $\omega_d:=\sigma_{d-1}(S^{d-1})=d\kappa_d$, see \cite{Sch14}.

When convenient, we use the Landau symbols to indicate the relation between two functions: If for $f,g:\mathbb{N}\to \R_+$, there exists a constant $\gamma>0$ and a number $n_0\in\mathbb{N}$ such that $f(n)< \gamma g(n)$ for all $n>n_0$, then we write $f\ll g$. If $g\ll f\ll g$, then we denote this by $f\approx g$.

In order to formulate our random model, we start with the following definition. Let $r>0$, and let $x, y\in\R^d$ be two points whose distance is not more than $r$. The intersection $[x,y]_r$ of all radius $r$ closed balls containing $x$ and $y$ is called the {\em $r$-spindle} spanned by $x$ and $y$. A set $C\subset\R^d$ is called {\em $r$-hyperconvex (or $r$-spindle convex)} if $[x,y]_s\subset C$ for all $x,y\in C$. Clearly, an $r$-hyperconvex set is also convex in the classical sense. We call a set $K\subset\R^d$ an {\em $r$-hyperconvex body} if it is compact, $r$-hyperconvex, and has non-empty interior. 

Let $K\subset\R^d$ be an $r$-hyperconvex body for $r>0$. Let $x_1,\ldots, x_n$ be i.i.d. random points from $K$ chosen according to the uniform probability distribution. Let $K_{(n)}^r$ denote the intersection of all radius $r$ closed balls that contain $x_1,\ldots, x_n$. It follows from the $r$-hyperconvex property of $K$ that $K_{(n)}^r\subset K$, see, for example, \cite[Corollary~3.4 on p. 205]{BLNP2007}. We call $K_{(n)}^r$ a uniform random ball-polytope (of radius $r$).

%We say that $K$ slides freely in a ball $B$ if for each point $x\in \bd B$, there exist a vector $v\in\R^d$ such that $x\in K+v\subset B$, cf. \cite{Sch14}. It is known that the compact, convex set $K\subset\R^d$ with non-empty interior is $r$-hyperconvex if and only if it slides freely in a ball of radius $r$, see %\cite[Proposition~2.2]{FKV16} and 
%\cite[Theorem~3.2.2]{Sch14}.  So, naturally, in this probability model we examine approximations of only such convex bodies.

An ordinary polytope is the convex hull of a finite number of points, which is the same as the intersection of all closed half-spaces containing the points. In the construction of $K_{(n)}^r$, the role of closed half-spaces is played by radius $r$ closed balls. This explains the use of the term ball-polytope for $K_{(n)}^r$. The simplest ball-polytope with non-empty interior is the $r$-spindle $[x,y]_r$ of just two points. Thus, unlike in the case of convex polytopes, any set of at least two points in $K$ determines a proper ball-polytope in $\R^d$ (one that has non-empty interior).  %Notice also, that unless some of the points are on a unique sphere of radius $r$, the ball-polytope $K_{(n)}^r$ is not the intersection of a finite number of balls. 
A ball $B$ of radius $r$ is a supporting ball of $K_{(n)}^r$ if $K_{(n)}^r\subset B$ and $K_{(n)}^r\cap \bd B\neq\emptyset$. We call a set $F\subset\bd K_{(n)}^r$ a {\em facet} if $F=K_{(n)}^r\cap \bd B$ for a supporting ball $B$ of radius $r$ and $F$ has positive surface measure on $\bd K$. The example of the spindle shows that a proper ball-polytope may have no facet at all, unlike in the case of an ordinary polytope.  
In this paper we study the number $f_{d-1}(K_{(n)}^r)$ of facets of $K_{(n)}^r$. We do not embark on a detailed investigation of the general facial structure of $K_{(n)}^r$. We note, however, that
if $x_1,\ldots, x_n\in K$ are i.i.d. uniform random points from $K$, and if $K_{(n)}^r$ has any facets, then they are spherical $(d-1)$-simplices with probability one.

Intersections of congruent balls and the associated notion of hyperconvexity have played important roles in the study of several problems recently, such as, for example, the Kneser-Poulsen conjecture, bodies of constant width, diametrically complete bodies, randomized isoperimetric inequalities, etc. For a more complete overview and references see, for example, \cites{BLNP2007, FTGF2015, MMO19, PP17}. Random approximations with intersections of congruent circles were treated, for example, in \cites{FKV14, FV18}. This new probability model can be considered as a generalization of the classical model with random polytopes, however, some of the phenomena that can be observed for balls is different from the classical model. 

We will examine this probability model for $r$-hyperconvex bodies that have sufficiently smooth boundary. 
We say that a convex body $K\subset\R^d$ slides freely in a ball $B$ if for each point $x\in \bd B$, there exist a vector $v\in\R^d$ such that $x\in K+v\subset B$, cf. \cite{Sch14}.
It is known that the following three statements are pairwise equivalent for a convex body $K$: 
i) $K$ is $r$-hyperconvex, 
ii) $K$ is the intersection of all closed balls of radius $r$ containing it, and 
iii) $K$ slides freely in a ball of radius $r$, cf. \cite[Corollary~3.4]{BLNP2007}. 

Let $K\subset\R^d$ be a convex body with $C^2$ smooth boundary. The fact that $K$ slides freely in a ball of radius $r$, or that it is hyperconvex with radius $r$, is equivalent to that all radii of curvature at each boundary point are at most $r$, cf. \cite[Theorem~3.2.12, Corollay~3.2.13]{Sch14}. 

In this paper we are concerned with the expectation $\E f_{d-1}(K^r_{(n)})$ in $C^2$ smooth convex bodies that slide freely in a ball of radius $R$ for some $R>0$. We will see that if $r>R$, then $\E f_{d-1}(K^r_{(n)})$ behaves in a similar manner (in the sense of the order of magnitude in $n$) as uniform random polytopes do in the ordinary $C^2_+$ convex case. 
The major difference occurs when $K$ is a ball of radius $r$. Then $\E f_{d-1}(K^r_{(n)})$ tends to a finite limit as $n\to\infty$. 
 
Fodor, Kevei and V\'\i gh proved (see Theorem~1.3 in \cite{FKV14}) an asymptotic formula for the expected number of vertices of uniform random disc-polygons of radius $r$ in a circular disc of radius $r$ in the plane. It is established in \cite{FKV14} that for $r>0$, it holds that
$$
\lim_{n\to\infty} \E f_0 ((rB^2)_{(n)}^r)=\frac{\pi^2}{2}.
$$

The first main result of this paper is the $d$-dimensional generalization of the above statement. 

\begin{theorem}\label{main-sphere}
For $r>0$ it holds that 
\begin{equation}\label{sphere}
\lim_{n\to\infty} \E f_{d-1}((rB^d)^r_{(n)})=\eta_d\frac{\pi^{d-1}\kappa_d}{\kappa_{d-1}},
\end{equation}
where the constant $\eta_d$ depends only on the dimension.
\end{theorem}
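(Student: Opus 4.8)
The plan is to write $\E f_{d-1}((rB^d)^r_{(n)})$ as an exact $d$-fold integral, and then extract the limit through a rescaling that reveals the cancellation responsible for the finite answer.

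\emph{Reduction to a $d$-point integral.} Since, with probability one, every facet of $(rB^d)^r_{(n)}$ is a spherical $(d-1)$-simplex spanned by exactly $d$ of the sample points, linearity of expectation gives $\E f_{d-1} = \binom{n}{d}\,\PP(x_1,\dots,x_d\ \text{span a facet})$. For $d$ points in general position there are generically two radius-$r$ balls $B_{+},B_{-}$ through them (precisely when their circumradius is at most $r$; otherwise none, and such configurations contribute nothing). The ball $B_{\pm}$ yields a facet exactly when none of the remaining $n-d$ points falls into the complementary cap $rB^d\setminus B_{\pm}$, an event of conditional probability $(1-V(rB^d\setminus B_{\pm})/(\kappa_d r^d))^{n-d}$. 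Hence
\[
\E f_{d-1} = \binom{n}{d}\,\E\Bigl[\sum_{\pm}\Bigl(1-\tfrac{V(rB^d\setminus B_{\pm})}{\kappa_d r^d}\Bigr)^{n-d}\Bigr].
\]

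\emph{The empty cap is linear in the center displacement.} For a radius-$r$ ball $B$ with center $c$ and $t:=|c|$, comparing $rB^d$ and $B$ across their radical hyperplane gives
\[
V(rB^d\setminus B)=2\kappa_{d-1}\int_0^{t/2}(r^2-z^2)^{(d-1)/2}\,\D z=\kappa_{d-1}r^{d-1}t+O(t^3)\qquad(t\to 0).
\]
This \emph{linear} behaviour in $t$ is the crux of the phenomenon: it forces the relevant centers to lie at distance $t\asymp 1/n$ from $o$, so that $B\approx rB^d$ and all supporting balls degenerate to $rB^d$ itself in the limit, which is exactly why only boundedly many facets survive.

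\emph{Change of variables, rescaling, and evaluation.} I would next apply a fixed-radius spherical Blaschke–Petkantschin substitution $x_i=c+r u_i$ with $u_i\in S^{d-1}$, writing $\prod_i\D x_i = r^{d(d-1)}\,j(c;u_1,\dots,u_d)\,\D c\,\prod_i\D u_i$; integrating $c$ over all of $\R^d$ automatically realizes the sum over $B_{+},B_{-}$. The requirement $x_i\in rB^d$ confines the $u_i$ to the cap $\{\langle c,u_i\rangle\le -t^2/(2r)\}$, which tends to the hemisphere $H=\{u:\langle c,u\rangle\le 0\}$ as $t\to 0$. Substituting $c=w/n$ turns the cap factor into $\exp(-\tfrac{\kappa_{d-1}}{\kappa_d r}|w|)$ while $\D c=n^{-d}\,\D w$ cancels $\binom{n}{d}\sim n^{d}/d!$; one checks that the powers of $r$ from the Jacobian and from the two uniform densities $\kappa_d^{-d}r^{-d^2}$ cancel as well. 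Dominated convergence then leaves a product of a radial integral $\int_{\R^d}e^{-a|w|}\D w=d!\,\kappa_d a^{-d}$ (with $a=\kappa_{d-1}/(\kappa_d r)$) and a dimensionless spherical integral $C_d=\int_{H^d} j(o;u_1,\dots,u_d)\,\D u_1\cdots\D u_d$, giving
\[
\lim_{n\to\infty}\E f_{d-1}((rB^d)^r_{(n)})=\frac{\kappa_d}{\kappa_{d-1}^{\,d}}\,C_d,
\]
which is of the asserted form $\eta_d\,\pi^{d-1}\kappa_d/\kappa_{d-1}$ upon setting $\eta_d=C_d/(\pi^{d-1}\kappa_{d-1}^{\,d-1})$, the factor $\pi^{d-1}$ emerging from the spherical geometry of $C_d$.

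\emph{Main obstacle.} The technical heart is twofold: computing the Jacobian $j$ of the fixed-radius spherical substitution and its limiting value $j(o;\cdot)$ as $c\to o$, and rigorously justifying the passage to the limit. For the latter one must show that centers with $|c|\gg 1/n$ contribute negligibly and exhibit an integrable dominating function for the rescaled integrand that is uniform in $n$; controlling the Jacobian and the cap factor simultaneously near the degenerate configurations (small simplices, circumradius close to $r$) is the delicate part.
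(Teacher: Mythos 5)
Your plan follows the paper's own proof of Theorem~\ref{main-sphere} essentially step for step: your reduction to the two-cap $d$-point integral is \eqref{expectation-formula}; your radical-hyperplane computation of the volume $V(rB^d\setminus B_{\pm})$, linear in the center displacement, is \eqref{cap-volume}; your fixed-radius substitution $x_i=c+ru_i$ is the map $T$ of \eqref{T}; and your rescaling $c=w/n$ with the hemisphere limit is, after writing $c$ in polar coordinates, exactly the paper's combination of the cut-off at height $h(n)\asymp \log n/n$, the asymptotics \eqref{betaintegral}, and the half-sphere formula \eqref{half-sphere}. The two obstacles you single out are precisely what the paper's tools supply. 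The Jacobian is computed in Lemma~\ref{mainlemma}: it equals $r^{d(d-1)}\nabla_d(u_1,\ldots,u_d)$, the volume of the parallelotope spanned by the $u_i$, and it does not depend on the center $c$ at all, so the ``limiting value of $j(c;\cdot)$ as $c\to o$'' that you anticipate as delicate simply does not arise. With this, your spherical constant becomes explicit,
\begin{equation*}
C_d=\int_{(S^{d-1}_+)^d}\nabla_d(u_1,\ldots,u_d)\,\D u_1\cdots\D u_d=\left(\frac{\omega_{d+1}}{2}\right)^{d-1}=\pi^{d-1}\kappa_{d-1}^{d-1},
\end{equation*}
by \eqref{half-sphere}, quoted from \cite{BHRS17}; this is where the factor $\pi^{d-1}$ comes from. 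The tail and domination issues you defer are handled in the paper by the elementary bound $\nabla_d\le 1$ on the region $t\ge h(n)$, and on $[0,h(n)]$ by sandwiching the integrand between $(1\pm\varepsilon)$-multiples of its limiting form and applying \eqref{betaintegral}; nothing deeper is needed.

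There is, however, one substantive point where your accounting and the paper's proof part ways, and you should be aware of it. Completing your computation gives $\lim_{n\to\infty}\E f_{d-1}=(\kappa_d/\kappa_{d-1}^{d})\,C_d=\pi^{d-1}\kappa_d/\kappa_{d-1}$, i.e.\ your normalized constant $C_d/(\pi^{d-1}\kappa_{d-1}^{d-1})$ equals $1$. The paper, when passing from \eqref{sphere:integral} to the parametrized integral, inserts the multiplicative factor $\eta_d=\PP(x_1,\ldots,x_d\text{ are in hyperconvex position})$, and this factor survives into the stated constant, with $0<\eta_d<1$ claimed for $d\ge 3$. The two answers differ. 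As far as I can verify, your bookkeeping is the self-consistent one: the image of the substitution consists exactly of the $d$-tuples lying on some unit sphere, each such tuple being hit exactly twice (once for each of $B_{\pm}$), which reproduces precisely the two terms of \eqref{expectation-formula}; the hyperconvexity indicator is thereby already absorbed into the parametrization, and no additional probability weight can appear. A concrete test: apply the change of variables to the integrand $\equiv 1$; both sides then equal twice the Lebesgue measure of the set of hyperconvex $d$-tuples, namely $2\eta_d\kappa_d^d$, whereas inserting the extra factor would give $2\eta_d^2\kappa_d^d$. So your route, once Lemma~\ref{mainlemma} and the domination step are supplied, does prove the theorem as literally stated (the limit is finite and of the displayed form with a constant depending only on $d$), but the constant you obtain corresponds to $\eta_d=1$ rather than to the paper's identification of $\eta_d$ as the hyperconvex-position probability. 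Flag this discrepancy explicitly when you write your argument up; it is a genuine conflict with the paper's proof, not a gap in your approach.
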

The exact geometric meaning of the constant $\eta_d$ will be explained later. Now we only note that $\eta_2=1$ and $0<\eta_d<1$ for $d\geq 3$. 

Although the proof of Theorem~\ref{main-sphere} is based on similar ideas as that of Theorem~1.3 in \cite{FKV14}, it uses several new tools that are not present in the planar argument in \cite{FKV14} but are necessary because of the general $d$-dimensional setting.

We note that Theorem~\ref{main-sphere} also yields that the limit of the expected number of vertices
%lower dimensional faces 
of $(rB^d)_{(n)}^r$ is also bounded above by a constant, 
$$\lim_{n\to\infty}\E f_0((rB^d)_{(n)}^r)\leq c(0,d),$$
where $c(0,d)$ depends only on the dimension of the space.
%Here $f_i^*(K_{(n)}^r)$ denotes the number of $i$-dimensional faces of $K_{(n)}^r$. 
%We say that a set of $i+1$ points $x_{i_1},\ldots, x_{i+1}\subset X$ span an $i$-dimensional face of $K_{(n)}^r$ if there is a point $p\in\R^d$ such that $\{x_{i_1},\ldots, x_{i+1}\}\subset rS^{d-1}+p$. In this case, the face of $K_{(n)}^r$ spanned by $x_{i_1},\ldots, x_{i+1}$ is the closure of the intersection of $\bd K$ with the intersection of all radius $r$ supporting spheres that contain exactly $x_{i_1},\ldots, x_{i+1}$. As already noted above, in general, the $(d-1)$-dimensional Hausdorff measure of an $i$-dimensional face of $K_{(n)}^r$ for $1\leq i\leq d-2$  is non-zero, as opposed to the case convex polytopes. 
%Furthermore, since $K_{(n)}^r$ is almost surely simplicial, it follows that 
%$$\E f^*_{d-2}(K_{(n)}^r)=\frac d2\cdot \E f_{d-1}^* (K_{(n)}^r).$$

The following asymptotic formula was obtained by Fodor, Kevei and V\'\i gh in \cite{FKV14} as a consequence of Efron's identity \cite{Efron} for the missed area in the planar case
$$\lim_{n\to\infty} \E \vol (rB^2\setminus (rB^2)^r_{(n)})\, n=\frac{r^2\pi^2}{2}.$$

From \eqref{sphere} it follows that
\begin{equation}\label{missedvolsphere}
\E \vol(B^d\setminus B^d_{(n)})\approx \frac 1n
\end{equation}
by the ball-convex version of Efron's identity \cite{Efron}:
\begin{equation}\label{efron}
\E f_{0}(K^r_{(n)})=\frac{n\E \vol(K\setminus K^r_{(n)})}{\vol (K)},
\end{equation}
whose two-dimensional version was proved in \cite[p. 911]{FKV14}. The proof in $d$-dimensions is completely analogous so we omit the detailed argument here.

We note that the fact that the limit in Theorem~\ref{main-sphere} is finite was announced (without proof) in \cite{F17}.
%, and, as already mentioned above,
%the $d=2$ special case of \eqref{sphere} was proved earlier by Fodor, Kevei and V\'\i gh, cf. \cite[Theorem~3]{FKV14}. 
The phenomenon described in Theorem~\ref{main-sphere} has no analogue in the probability model of uniform random polytopes in convex bodies. 

Furthermore, a similar phenomenon was described by B\'ar\'any, Hug, Reitzner and Schneider in \cite{BHRS17} in a different probability model. They proved, cf. \cite[Theorem~3.1]{BHRS17}, that the expected number of facets of random spherical polytopes generated as the spherical convex hull of $n$ i.i.d. uniform random points from a half-sphere tends to a finite limit as $n\to\infty$.  

For the case of general $K$ in the plane it was proved by Fodor, Kevei and V\'\i gh, see part (1.4) of Theorem~1.1 in \cite{FKV14}, that
$$\lim_{n\to\infty} \E f_0(K_{(n)}^r)\, n^{-1/3}=c_K(r),$$
if $\bd K$ is $C^2$ and for the curvature $\kappa$ of $\bd K$ it holds that $\kappa (x)>1/r$ for all $x\in \bd K$.
The quantity $c_K(r)$ is a constant that depeds only on $K$ and $r$. 

The second main result of this paper is the following inequality that establishes the order of magnitude of the exptected number of facets of $K_{(n)}^r$ under the assumption that $\bd K$ is $C^2$ and $K$ slides freely in a ball of radius $R<r$. 

\begin{theorem}\label{main-smooth}
Let $K\subset\R^d$ be a convex body with $C^2$ boundary such that $K$ slides freely in a ball of radius $R>0$. For $r>R$, %there exist positive constants $\widetilde{c}_K({r,d})$ and $\widetilde{C}_K(r, d)$, depending on $K$, $d$ and $r$, such that for sufficiently large $n$, 
it holds that
\begin{equation}
\E f_{d-1}( K^r_{(n)})\approx n^{\frac{d-1}{d+1}},
\end{equation}
where the implied constants depend only on $K$, $d$ and $r$.
%\begin{equation}
%\widetilde{c}_K(r,d)\cdot n^{\frac{d-1}{d+1}}\leq \E (f_{d-1}( K^r_{(n)}))\leq \widetilde{C}_K(r,d)\cdot n^{\frac{d-1}{d+1}}.
%\end{equation}
\end{theorem}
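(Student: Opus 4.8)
The plan is to follow the standard route for facet numbers of random polytopes, adapted to the ball-convex setting, combined with a geometric comparison lemma showing that the relevant ball-caps behave like ordinary caps precisely because $r>R$. First I would record the integral-geometric identity for the expected number of facets. Since every facet of $K^r_{(n)}$ is, with probability one, a spherical $(d-1)$-simplex spanned by $d$ of the points lying on the bounding sphere of a supporting ball, summing over all $d$-element subsets gives
$$
\E f_{d-1}(K^r_{(n)})=\binom{n}{d}\frac{1}{\vol(K)^d}\int_{K^d}\sum_{B}\left(\frac{\vol(K\cap B)}{\vol(K)}\right)^{n-d}\,\D x_1\cdots\D x_d,
$$
where the inner sum runs over the (generically two) radius-$r$ balls $B$ whose bounding sphere passes through $x_1,\dots,x_d$, and the power counts the probability that the remaining $n-d$ points land in $B$, equivalently that the cap $K\setminus B$ is empty. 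I would then apply a spherical Blaschke--Petkantschin formula to trade the integration over the $d$-tuple $(x_1,\dots,x_d)$ for integration over the centre $c$ of $B$ and the positions $u_1,\dots,u_d$ of the points on the sphere $\bd B$, picking up a Jacobian proportional to the $(d-1)$-content of their convex hull.

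The geometric heart of the argument is a cap comparison lemma. Fixing $p\in\bd K$ and tangential coordinates, the free-sliding hypothesis together with $C^2$ smoothness forces every principal curvature of $\bd K$ into a fixed interval $[1/R,M]$, so $K$ is in fact uniformly $C^2_+$. For a supporting ball $B$ cutting off a cap near $p$ at height $t$, the footprint of $K\setminus B$ is, to leading order, the ellipsoid $\{\tfrac12\sum_i(\kappa_i-1/r)x_i^2<t\}$. Because $r>R$ yields $\kappa_i-1/r\geq 1/R-1/r>0$ uniformly, this footprint has diameter $\asymp\sqrt t$ and the cap satisfies $\vol(K\setminus B)\asymp t^{(d+1)/2}$, exactly the order of an ordinary cap of height $t$ in a $C^2_+$ body. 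This is the step where $r>R$ is indispensable: at $r=R$ the coefficient $\kappa_i-1/r$ can vanish and the footprint degenerates, which is precisely the mechanism behind the finite limit in Theorem~\ref{main-sphere}.

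With the comparison in hand, both bounds follow the classical pattern. For the upper bound I would estimate the probability factor by $\exp(-c\,n\,\vol(K\setminus B))$, localise the $c$-integration to a neighbourhood of $\bd K$ parametrised by the touching point and the height $t$, and use that the $d$ points are confined to a spherical footprint of $(d-1)$-area $\asymp t^{(d-1)/2}$; the resulting one-dimensional integral $\int_0^\infty t^{\alpha}e^{-cnt^{(d+1)/2}}\,\D t$ (with $\alpha$ collecting the powers of $t$ from the Jacobian and the footprint) concentrates at $t\asymp n^{-2/(d+1)}$ and gives $\E f_{d-1}(K^r_{(n)})\ll n^{(d-1)/(d+1)}$ after combining the powers of $n$ from $\binom{n}{d}$ and the substitution. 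For the matching lower bound I would run an economic cap-covering argument (in the spirit of B\'ar\'any--Larman) adapted to ball-caps: using the comparison lemma one can place $\gg n^{(d-1)/(d+1)}$ essentially disjoint ball-caps of volume $\asymp 1/n$ along $\bd K$, and in each such cap the event that it contains exactly $d$ points spanning a facet while a surrounding slightly larger cap is empty has probability bounded below, so linearity of expectation closes the gap.

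I expect the main obstacle to be technical rather than conceptual: carrying out the spherical Blaschke--Petkantschin change of variables with the correct Jacobian, and, above all, proving the cap comparison uniformly over all of $\bd K$ and over all supporting balls---including controlling the non-local contribution of balls whose caps are either not small or not concentrated near a single boundary point, which must be shown to be negligible in the final power count.
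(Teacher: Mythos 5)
Your proposal is correct, and on the upper-bound side it is essentially the paper's own argument: the paper starts from the same facet-counting identity \eqref{expectation-formula} (with an explicit indicator that $x_1,\ldots,x_d$ are in hyperconvex position and with both caps $V_\pm$ kept), changes variables by the spherical Blaschke--Petkantschin formula of Lemma~\ref{mainlemma} (Jacobian $r^{d(d-1)}\nabla_d(u_1,\ldots,u_d)$, which for points clustered on a sphere is comparable, up to bounded factors, to your ``$(d-1)$-content'' Jacobian), parametrizes ball centres near $\bd K$ by touching point and height via Lemma~\ref{main2} (resting on the unique vertex--height parametrization of Lemma~\ref{unique-vertex}), uses exactly your cap comparison --- the asymptotics \eqref{cap-volume1}, valid because after normalization all principal curvatures exceed $1$, which is precisely where $r>R$ enters --- and concludes with the beta-type integral asymptotics \eqref{betaintegral}; the non-local contributions you flag at the end are handled by Lemma~\ref{delta-cap} (the larger of the two caps has volume bounded below by a constant $\delta$) together with a splitting of the height integral at $h(n)=(c\ln n/n)^{2/(d+1)}$. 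The genuine divergence is in the lower bound. The paper uses no cap covering at all: it reruns the same integral computation with matching lower estimates, the key input being the two-sided bound $\psi(u,t)\approx t^{(d^2-1)/2}$ for the simplex-volume moment over the cap's spherical footprint, obtained by sandwiching that footprint between the corresponding caps of a freely rolling sphere of radius $\varrho$ and the freely sliding sphere of radius $R$ and invoking the Busemann--Miles constant $I(d-1)$; combined with the two-sided cap-volume bound \eqref{vol-eps} and \eqref{betaintegral}, this yields the matching lower bound. You instead propose placing $\gg n^{(d-1)/(d+1)}$ disjoint ball-caps of volume $\asymp 1/n$ along $\bd K$ and showing that each produces a facet with probability bounded below. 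That route also works for a body whose curvatures are pinched in $[1/R,M]$, but one caveat: invoking an ``economic cap-covering argument in the spirit of B\'ar\'any--Larman'' is risky phrasing, since the paper explicitly states that the ball-convex analogue of the B\'ar\'any--Larman theorem is \emph{not} available and poses it as an open problem; what your argument actually needs --- explicitly constructed disjoint caps of height $\asymp n^{-2/(d+1)}$ on a uniformly $C^2_+$ boundary, together with the observations that $d$ well-spread points in such a cap are automatically in hyperconvex position and that the smaller cap cut off by the radius-$r$ sphere through them has volume $O(1/n)$ --- is elementary and independent of that theorem, so it should be stated as such rather than as an appeal to an unavailable result. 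As for what each approach buys: the paper's keeps the entire proof inside one integral representation, and would upgrade to an exact asymptotic formula if the precise limit of $\psi(u,t)\,t^{-(d^2-1)/2}$ were identified, whereas your localization argument is more modular and would survive in situations where the Jacobians cannot be computed explicitly.
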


Although Theorem~\ref{main-smooth} is not an exact asymptotic formula for the expected number of facets, it extends, at least in some sense, the result of (1.4) of Theorem~1.1 in \cite{FKV14} to $d$-dimensions. Its proof also uses several tools that are new compared to the planar case.

We note that the order of magnitude of $\E f_{d-1} (K^r_{(n)})$ in Theorem~\ref{main-smooth} is the same as for classical uniform random polytopes in the case when $K$ has $C^2_+$ boundary. 
When $K_n$ is the convex hull of $n$ i.i.d. uniform random points from the convex body $K$ with $C^2_+$ smooth boundary, and $f_i(K_n)$ denotes the number of $i$-dimensional faces of $K_n$, then for $0\leq i\leq d-1$, 
\begin{equation}\label{linear-formula}
\lim_{n\to\infty}\E f_{i}(K_n)\, n^{-\frac{d-1}{d+1}}=c_{d,i}\Omega (K),
\end{equation}
where $\Omega (K)$ denotes the affine surface area of $K$, and $c_{d,i}$ is a constant that depends only on the dimension of space. The \eqref{linear-formula} asymptotic formula was proved by R\'enyi and Sulanke \cite{RS63} in the two-dimensional case. B\'ar\'any \cite{B89} established lower and upper bounds of the correct order of magnitude for $\E(f_{d-1}(K_n))$ for general $d$. This exact form of \eqref{linear-formula} is due to Wieacker \cite{Wie} for $i=d-1$, and for general $i$ to Reitzner \cite{R05}. The method of convex floating bodies, that was used by B\'ar\'any \cite{B89}, could naturally be employed in the case of random ball-polytopes as well provided we had the equivalent of the Economic Cap Covering Theorem of B\'ar\'any and Larman \cite{BL88}. Therefore, it would of major interest to prove the ball-convex equivalent of the Economic Cap Covering Theorem. However, it seems to the author that this may require new ideas compared to the Euclidean case.

%\textcolor{red}{We note that in the above comparison, the classical quantities associated with classical random polytopes are affine invariant, such as the affine surface area $\Omega(\cdot)$. The $r$-hyperconvex random model, is not affine invariant: a scaling changes the radii, so we can assume rhat $r=1$ or $V(K)=1$ but not both. A general affine transformation changes the balls into ellipsoids.}
 
%On the other hand, a ball $B$ {\em rolls freely} $K$ if for each point $x\in \bd K$, there is a vector $v\in\R^d$ such that $x\in B+v\subset K$.

Finally, we also note that so far all results on the random ball-polytope model have been in the plane only, see, for example, \cites{FKV14, FV18}. In this paper we develop some of the techniques that can be used to investigate such problems in $\R^d$ for arbitrary $d$, see Section~\ref{sec:tools}. Beside the already mentioned phenomenon that the expected number of proper facets tends to a finite limit a $n\to\infty$ in a ball, we also point out below the problem of finding the probability that the i.i.d. uniform random points $x_1,\ldots, x_d$ from $K$ are in hyperconvex position. This question, at least for $d$ points, is new even in the context of hyperconvex sets, as it does not occur in the plane, only for $d\geq 3$, and it has no direct analog in classical convexity, for details see below. 

The outline of the rest of the paper is the following: In Section~2, we collect some necessary general tools for our arguments. In Section~3, we study properties of ball-caps of convex bodies. In Section~4, we establish a general formula for the expectation of facet numbers, and Sections~5 and 6 contain the proofs of Theorems~\ref{main-sphere} and \ref{main-smooth}, respectively.

\section{Tools}\label{sec:tools}
%In this section we collect some of the necessary tools that will be used in our arguments. 
%Some of the the following statements are formulated in a more general context than strictly necessary  but they may also be used in similar problems in more complicated settings. 

%Recall that $V(\cdot)$ is the $d$-dimensional Lebesgue measure. 
%Integration in $\R^d$ is always with respect to the $d$-dimensional Lebesgue measure unless noted otherwise.  
%We denote by $S^{d-1}$ the $d$-dimensional unit sphere centred at $o$. 
%Let $B^d$ be the unit ball of $\R^d$ centred at the origin $o$, and let $S^{d-1}$ be the unit sphere, which is the boundary $\bd B^d$ of the unit ball.  
%The spherical Lebesgue measure is denoted by $\sigma_{d-1}(\cdot)$. Then $\omega_d=\sigma_{d-1}(S^{d-1})=d\kappa_d$, cf \cite{Sch14}.

We start with the following statement, which is a Blaschke--Petkantschin type transformation formula involving $d$-dimensional spheres of radius $r>0$. It is similar to, for example, \cite[Theorem~7.3.1., p. 287]{SW08}, which was originally proved by Miles \cite{Miles1970}. 

For $v_1,\ldots,v_d\in\R^d$, let $\nabla_d(v_1,\ldots,v_d)$
denote the $d$-dimensional volume of the parallelotope spanned by the vectors $v_1,\ldots, v_d$.
Let $r>0$ be fixed, and consider the  differentiable map 
\begin{equation}\label{T}
T:\R^d\times(S^{d-1})^d\to (\R^d)^d,\quad T(z,u_1,\ldots,u_d)=(z+ru_1,\ldots,z+ru_d).
\end{equation}
Let  $D\subset \R^d\times(S^{d-1})^d$ be a measurable set such that the restriction of $T$ to $D$ is bijective with the possible exception of a set of measure zero. Then the following holds.

\begin{lemma}\label{mainlemma}
If $f:(\R^d)^d\to \R$ is a non negative measurable function, then
\begin{multline}\label{transform}
\int_{T(D)}\cdots\int_{T(D)} f(x_1,\ldots x_d)\; 
\D x_1\ldots \D x_d\\
 =r^{d(d-1)}\int_{\R^d}\int_{S^{d-1}}\cdots\int_{S^{d-1}}
{\mathbf 1}((z,u_1,\ldots, u_d)\in D)\\
 \times f(z+ru_1,\ldots, z+ru_d) \nabla_d(u_1,\ldots,u_d)\;
 \D u_1\ldots\D u_d \D z.
\end{multline}
\end{lemma}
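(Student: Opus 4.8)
The plan is to read \eqref{transform} as an instance of the change-of-variables (area) formula for the map $T$, reducing everything to the computation of one Jacobian. The domain $\R^d\times(S^{d-1})^d$ is a smooth manifold of dimension $d+d(d-1)=d^2$, and its product Riemannian volume measure is exactly the measure $\D u_1\cdots\D u_d\,\D z$ appearing on the right-hand side (Lebesgue measure on the $\R^d$-factor and spherical Lebesgue measure on each sphere). The target $(\R^d)^d$ also has dimension $d^2$. Since $T$ restricted to $D$ is injective up to a set of measure zero, the area formula gives $\int_{T(D)}f\,\D x=\int_D (f\circ T)\,J_T\,\D u_1\cdots\D u_d\,\D z$, where $J_T$ is the Jacobian of $T$ taken with respect to orthonormal bases of the tangent spaces. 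Introducing the indicator ${\mathbf 1}((z,u_1,\ldots,u_d)\in D)$ to rewrite $\int_D$ as an integral over the full parameter space then yields \eqref{transform}, provided one shows $J_T=r^{d(d-1)}\nabla_d(u_1,\ldots,u_d)$.

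To compute $J_T$, I would fix $(z,u_1,\ldots,u_d)\in D$ and choose at each $u_i$ an orthonormal frame of the tangent space $u_i^{\perp}=T_{u_i}S^{d-1}$, assembling it into a $d\times(d-1)$ matrix $Q_i$ with $Q_i^{\top}Q_i=I_{d-1}$ and $Q_iQ_i^{\top}=I_d-u_iu_i^{\top}$. Using the standard basis on the $\R^d$-factor, the differential of $T$ sends $(\dot z,\dot u_1,\ldots,\dot u_d)\mapsto(\dot z+r\dot u_1,\ldots,\dot z+r\dot u_d)$; in these orthonormal bases it is represented by the $d^2\times d^2$ block matrix $M$ in which the block column corresponding to $z$ consists of $d$ copies of $I_d$, while among the remaining block columns the one corresponding to $u_i$ contributes $rQ_i$ only in the $i$-th block row. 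Subtracting the first block row from all the others turns the $z$-block column into $(I_d,0,\ldots,0)^{\top}$, so $M$ becomes block upper triangular with an $I_d$ block and a square block $N$ of size $d(d-1)$ on the diagonal; hence $\det M=\det N$. The matrix $N$ has $-rQ_1$ in every block row and $rQ_i$ on the diagonal, so factoring $r$ out of each of its $d(d-1)$ columns gives exactly $\det M=r^{d(d-1)}\det N'$, where $N'$ is the same matrix with the factors $r$ removed.

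It remains to prove $|\det N'|=\nabla_d(u_1,\ldots,u_d)$, which is the heart of the matter. I would compute the Gram determinant $\det(N'^{\top}N')$: written in $(d-1)\times(d-1)$ blocks it is an arrowhead matrix with $(d-1)I_{d-1}$ in the top-left corner, identity blocks along the rest of the diagonal, and off-diagonal blocks $-Q_1^{\top}Q_k$. Taking the Schur complement with respect to the identity part and using $Q_kQ_k^{\top}=I_d-u_ku_k^{\top}$ collapses it to $\det(N'^{\top}N')=\det\bigl(\sum_{k=2}^{d}(Q_1^{\top}u_k)(Q_1^{\top}u_k)^{\top}\bigr)=(\det A)^2$, where $A=Q_1^{\top}[u_2\mid\cdots\mid u_d]$ is $(d-1)\times(d-1)$; thus $|\det N'|=|\det A|$. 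To identify $|\det A|$ with $\nabla_d(u_1,\ldots,u_d)=|\det[u_1\mid\cdots\mid u_d]|$, I would use that $U_1=[Q_1\mid u_1]$ is orthogonal: left-multiplying $[u_1\mid\cdots\mid u_d]$ by $U_1^{\top}$ and using $Q_1^{\top}u_1=0$ and $u_1^{\top}u_1=1$ turns the first column into $(0,\ldots,0,1)^{\top}$, and a single cofactor expansion exhibits $A$ as the surviving minor. Since $|\det U_1|=1$, this gives $|\det A|=\nabla_d(u_1,\ldots,u_d)$, completing the proof that $J_T=|\det M|=r^{d(d-1)}\nabla_d(u_1,\ldots,u_d)$.

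The main obstacle is this determinant identity $|\det N'|=\nabla_d(u_1,\ldots,u_d)$. Because the blocks coming from the spherical tangent spaces are rectangular ($d\times(d-1)$), the matrix $M$ admits no purely block-triangular evaluation, and one is forced to pass through the Gram matrix and a Schur-complement reduction; arranging that computation so that the power $r^{d(d-1)}$ and the parallelotope volume $\nabla_d$ appear transparently is the delicate part, whereas the area-formula set-up of the first paragraph is routine.
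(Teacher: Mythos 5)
Your proposal is correct and is essentially the paper's own proof: the same change-of-variables setup, the same block representation of $\D T$ via orthonormal frames on the tangent spaces $T_{u_i}S^{d-1}$ (your $Q_i$ are the paper's $\dot u_i$), and the same evaluation of the Jacobian by a Gram-determinant/Schur-complement computation whose key input is $Q_iQ_i^{\top}=I_d-u_iu_i^{\top}$. The only divergence is bookkeeping: the paper forms the Gram matrix of the full $d^2\times d^2$ Jacobian (with $r$ factored out), and its Schur complement collapses at once to $\left|\sum_{i=1}^{d}u_iu_i^{\top}\right|=\nabla_d^2(u_1,\ldots,u_d)$, whereas your preliminary block row reduction produces the smaller matrix $N'$ and then forces the extra identification $|\det A|=\nabla_d(u_1,\ldots,u_d)$ via the orthogonal completion $[Q_1\mid u_1]$ --- a correct but avoidable detour, so the step you single out as ``the heart of the matter'' is precisely what the paper's arrangement bypasses.
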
 
Here 
%integration in $\R^d$ is with respect to the $d$-diensional Lebesgue measure, and on $S^{d-1}$ with respect to the spherical Lebesgue measure. 
$\int_{\R^d}\ldots \D x $ denotes integration with respect to the Lebesgue measure in $\R^d$, and 
$\int_{S^{d-1}}\ldots \D u $ denotes integration with respect to the spherical Lebesgue measure on $S^{d-1}$.
We note that the two-dimensional version of \eqref{transform} was already known to Santal\'o \cite{S46}, and was recently used in \cite{FKV14}, where a short proof of it was also provided for $d=2$. 
\begin{proof}
	We need to show that the Jacobian $|\D T|$ of $T$ is
	$$|\D T|=r^{d(d-1)}\cdot\nabla_d(u_1,\ldots, u_d).$$
	We follow a similar argument and notation as in the proof of Theorem~7.3.1 in Schneider and Weil \cite[pp. 287--288]{SW08} whose idea goes back to M\o ller \cite{Moller1994}.
	
	Vectors of $\R^d$ are considered columns, and $I_d$ is the $d\times d$ identity matrix. For a vector valued differentiable function $v$, the symbol $\dot{v}$ denotes its derivative.  
	
	We assume that in a neighbourhood of the $u_i$ the local coordinate system is chosen such that the $d\times d$ matrix $(u_i\dot{u}_i)$ is orthogonal for all $i$. 
	%This can be achieved by standard methods. 
	We recall from \cite[p. 287]{SW08} that for a vector $u\in S^{d-1}$, where the matrix $(u\dot{u})$ is orthogonal, the following hold
	$$\dot{u}^tu=0,\quad \dot{u}^t\dot{u}=I_{d-1},\quad I_d-\dot{u}\dot{u}^t=uu^t.$$
	The Jacobian of $T$ can be written in the following block matrix form
	$$\D T=\left |\begin{matrix}
	I_d & r\dot{u}_1 & 0 & \cdots & 0\\
	I_d & 0 & r\dot{u}_2 & \cdots & 0\\
	\vdots & \vdots & \vdots & \ddots & 0\\
	I_d & 0 & 0 & \cdots & r\dot{u}_d\\
	\end{matrix}\right |.$$
	
	Then it follows that
	\begin{align*}
	r^{-2d(d-1)}(\D T)^2 & =
	\left |
	\begin{matrix}
	I_d & \ldots & I_d\\
	\dot{u}_1^t & \ldots & 0\\
	\vdots & \ddots & \vdots\\
	0 & \ldots & \dot{u}^t_d\\
	\end{matrix}
	\right |
	\cdot \left |
	\begin{matrix}
	I_d & \dot{u}_1 & \ldots & 0\\
	\vdots & \vdots & \ddots & \vdots\\
	I_d & 0 & \ldots & \dot{u}_d
	\end{matrix}
	\right |\\
	& =
	\left |
	\begin{matrix}
	dI_d & \dot{u}_1 & \dot{u}_2 & \ldots & \dot{u}_d\\
	\dot{u}_1^t & I_{d-1} & 0 & \ldots & 0\\
	\dot{u}_2^t & 0 & I_{d-1} & \ldots &\vdots\\
	\vdots & \vdots & \vdots & \ddots & \vdots\\
	\dot{u}_d^t & 0 & 0 & \ldots & I_{d-1}
	\end{matrix}
	\right |\\
	& =\left | dI_d-\sum_{i=1}^{d}\dot{u}_i\dot{u}_i^t \right |
	=\left | \sum_{i=1}^{d} u_i u_i^t \right |\\
	& = 
	\left |
	\left (
	\begin{matrix}
	u_1 & \ldots &u_d
	\end{matrix}
	\right )
	\left (
	\begin{matrix}
	u_1^t\\
	\vdots\\
	u_d^t
	\end{matrix}
	\right )
	\right |
	=
	\left |
	\left (
	\begin{matrix}
	u_1 & \ldots & u_d
	\end{matrix}
	\right )
	\right |^2
	= \nabla_d^2(u_1, \ldots, u_d),
	\end{align*}
	which finishes the proof of the lemma.
\end{proof}

%Let $K\subset\R^d$ be a convex body 
%with a $C^2$ smooth boundary which slides freely in a ball of radius $R>0$. Notice that if $v_1,\ldots, v_d\in K$, then there exists a sphere of radius $R$ that contains the points. In fact, in most cases there are exactly two such spheres. The only exception, when the sphere is unique, is when the points are on a sphere of radius $R$ of smaller dimension. These exceptional cases form a set of measure zero with respect to $\lambda^d$ in $(\R^d)^d$.

%Let $x_0,\ldots, x_q\in\R^d$ be points. We denote by $\Delta_q(x_0,\ldots, x_q)$ the $q$-dimensional volume of the convex hull of $x_0,\ldots, x_q$. We will use the following result of Miles \cite{M71}, which we state here in the form presented in Schneider and Weil \cite[Theorem~8.2.3 on page 302]{SW08}.
%\begin{theorem}
%For integers $d\geq 1$, $1\leq q\leq d$, $k\geq 0$,
%\begin{align}
%J(d,q,k)&:=\int_{B^d}\cdots\int_{B^d}
%\Delta_q(x_0,\ldots, x_q)^k \lambda (dx_0)\cdots \lambda (dx_q)\nonumber\\
%&=\frac{1}{(q!)^k}\kappa_{d+k}^{q+1}
%\frac{\kappa_{q(d+k)+d}}{\kappa_{(q+1)(d+k)}}
%\frac{d_{dq}}{b_{(d+k)q}},
%\end{align}
%where 
%\begin{equation}
%b_{dq}:=\frac{\omega_{d-q+1}\cdots \omega_{d}}{\omega_1\cdots \omega_q}.
%\end{equation}
%\end{theorem}

%The following statement is also an important tool in our argument.
We say that a ball $B$ rolls freely in a convex body $K\subset\R^d$, if for any $x\in \bd K$, there exists a $p\in \R^d$ such that $x\in B+p\subset K$, cf. \cite{Sch14}. 

Let $K\subset\R^d$ be a convex body with $C^2_+$ smooth boundary.  Then there exist positive constants $R\geq \varrho>0$ such that $K$ slides freely in a ball of radius $R$ and a ball of radius $\varrho$ rolls freely in $K$, cf. \cite[Theorem 3.2.12, Corollary 3.2.13]{Sch14}. Let $R_K$ denote the smallest number such that $K$ slides freely in a ball of radius $R_K$.  
%$$R_K:=\inf\{R: \text{$K$ slides freely in a ball of radius $R$}\},$$
%and 
%$$r_K:=\sup \{r:\text{a ball of radius $r$ rolls freely in $K$}\}.$$
%As the boundary of $K$ is $C^2_+$ $R_K$ and $r_K$ are attained.

Let $\sigma_K:\bd K\to S^{d-1}$ denote the spherical image map which assigns to 
each $x\in\bd K$ the unique outer unit normal $\sigma_K(x)\in S^{d-1}$
to $\bd K$  at $x$. In this particular case, the inverse 
$\sigma_K^{-1}:S^{d-1}\to \bd K$ of the spherical map $\sigma_K$ is also well-defined
and bijective between $S^{d-1}$ and $\bd K$, and to a unit vector $u\in S^{d-1}$
it assigns the unique boundary point $x\in\bd K$ where the outer unit normal
to $\bd K$ is exactly $u$. It is known that both $\sigma_K$ and $\sigma_K^{-1}$
are $C^1$ functions in this particular case, see \cite[pp. 113--115]{Sch14}. 

Let $r\geq R_K$, and define the differentiable map
$\Phi_r : S^{d-1}\times \R_+\to \R^d$ as 
\begin{equation}\label{Phi}
\Phi_r(u,t):=u^{-1}_K(u)-(r+t)u.
\end{equation}

\begin{lemma}\label{main2}
For the Jacobian $|\D \Phi_r|$ it holds that
\begin{equation}
|\D \Phi_r(u,t)|=\left |\sum_{i=0}^{d-1} (-1)^i{d-1\choose i}s_{d-i-1}(u)(r+t)^i\right |, 
\end{equation}
where
$$
s_j(u)={d-1\choose j}^{-1}\sum_{1\leq i_1<\cdots <i_j\leq d-1}r_{i_1}(u)\cdots r_{i_j}(u)
$$
are the normalized elementary symmetric functions of the principal radii of curvature $r_1(u),\ldots, r_{d-1}(u)$ at $\sigma_K^{-1}(u)\in\bd K$.
\end{lemma}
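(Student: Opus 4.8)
The plan is to compute the differential $\D\Phi_r$ at a point $(u,t)$ directly and then read off its determinant. Since $\Phi_r$ maps the $d$-dimensional manifold $S^{d-1}\times\R_+$ into $\R^d$, I first identify the tangent space of the domain at $(u,t)$ with $u^{\perp}\oplus\R$, the first summand being $T_uS^{d-1}=u^{\perp}$ and the second recording the variation of $t$. Writing $x=\sigma_K^{-1}(u)$, the crucial geometric input is that, under the $C^2_+$ hypothesis, the inverse spherical image map $\sigma_K^{-1}$ is $C^1$ and its differential $\D\sigma_K^{-1}(u)$ is a self-adjoint endomorphism of $u^{\perp}$ (note that $T_x\bd K=u^{\perp}$ as well, since $u$ is the outer unit normal at $x$) whose eigenvalues are exactly the principal radii of curvature $r_1(u),\ldots,r_{d-1}(u)$. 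This is the standard description of the reverse Weingarten map, which I would invoke from Schneider's book \cite{Sch14}.

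With this in hand, for a tangent vector $(\dot u,\dot t)$ with $\dot u\in u^{\perp}$, differentiating $\Phi_r(u,t)=\sigma_K^{-1}(u)-(r+t)u$ gives
\[
\D\Phi_r(\dot u,\dot t)=\bigl(\D\sigma_K^{-1}(u)-(r+t)\,\mathrm{id}_{u^{\perp}}\bigr)(\dot u)-\dot t\,u.
\]
The first term lies in $u^{\perp}$ and the second in $\mathrm{span}(u)$, so in the orthogonal decomposition $\R^d=u^{\perp}\oplus\mathrm{span}(u)$ the matrix of $\D\Phi_r$ is block diagonal. Choosing an orthonormal basis $e_1,\ldots,e_{d-1}$ of $u^{\perp}$ consisting of principal directions (eigenvectors of $\D\sigma_K^{-1}(u)$), together with $e_d=u$, the upper block is $\mathrm{diag}\bigl(r_1(u)-(r+t),\ldots,r_{d-1}(u)-(r+t)\bigr)$ and the lower $1\times 1$ block is $(-1)$. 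Hence $\det\D\Phi_r=-\prod_{j=1}^{d-1}\bigl(r_j(u)-(r+t)\bigr)$, and the Jacobian $|\D\Phi_r|$ is the absolute value of this quantity.

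It then remains to rewrite the product in the stated symmetric-function form. Expanding $\prod_{j=1}^{d-1}\bigl(r_j(u)-(r+t)\bigr)$ as a polynomial in $r+t$ yields $\sum_{i=0}^{d-1}(-1)^i(r+t)^i\,e_{d-1-i}(r_1(u),\ldots,r_{d-1}(u))$, where $e_k$ denotes the $k$-th elementary symmetric polynomial. Substituting $e_{d-1-i}=\binom{d-1}{d-1-i}s_{d-1-i}(u)=\binom{d-1}{i}s_{d-i-1}(u)$ from the definition of the normalized symmetric functions, and absorbing the overall sign into the absolute value, gives precisely the claimed expression.

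I expect the only genuinely delicate point to be the justification of the eigenvalue description of $\D\sigma_K^{-1}(u)$: one must use the $C^2_+$ regularity to guarantee that $\sigma_K^{-1}$ is differentiable and that its differential is the self-adjoint inverse of the Weingarten map, with eigenvalues the principal radii of curvature and eigenvectors the principal directions. Once this differential-geometric fact is set up, the remainder is a routine linear-algebra determinant computation together with a standard elementary-symmetric-polynomial expansion.
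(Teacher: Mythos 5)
Your proof is correct, and it is essentially the argument the paper has in mind: the paper does not spell out a proof but simply refers to the standard computation in Schneider \cite{Sch14} (Section~2.5, p.~122) with the substitution $\lambda=-(r+t)$, and that computation is exactly your diagonalization of the reverse Weingarten map $\D\sigma_K^{-1}(u)$ in the principal directions, extended by the trivial $(-1)$ block coming from the $t$-coordinate, followed by the elementary-symmetric-polynomial expansion of $\prod_{j=1}^{d-1}\bigl(r_j(u)-(r+t)\bigr)$.
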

The proof of Lemma~\ref{main2} is quite standard, in fact, using Lemma~\ref{unique-vertex}, it is essentially the same as the one presented on page 122 of Section~2.5 in \cite{Sch14} with the substitution $\lambda=-(r+t)$.

Next, we quote (\cite[(5.6) on page 909]{FKV14}, see also \cite[(11) on page 2290]{BFRV09} and \cite{A64}) the following asymptotic formula.
\begin{lemma}
For any $\beta\geq 0$, $\omega>0$ and $\alpha >0$, it holds that
\begin{equation}\label{betaintegral}
\int_{0}^{g(n)}t^\beta (1-\omega t^\alpha)^n dt\sim \frac{1}{\alpha \omega^{\frac{d+1}{\alpha}}}\cdot \Gamma(\frac{\beta+1}{\alpha})\cdot n^{-\frac{\beta+1}{\alpha}},
\end{equation} 
as $n\to\infty$, assuming that
$$\left (\frac{(\beta+\alpha+1)\ln n}{\alpha \omega n}\right )^{1/\alpha}< g(n)< \omega^{-\frac{1}{\alpha}}$$
for sufficiently large $n$. 
\end{lemma}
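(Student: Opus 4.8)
The plan is to reduce the integral to a standard Gamma integral via the substitution $s=n\omega t^{\alpha}$, after which the factor $(1-\omega t^{\alpha})^{n}=(1-s/n)^{n}$ converges to $e^{-s}$ and the whole quantity converges to $\Gamma\bigl(\tfrac{\beta+1}{\alpha}\bigr)$. The governing heuristic is that, since $(1-\omega t^{\alpha})^{n}\approx e^{-n\omega t^{\alpha}}$ decays extremely fast, essentially all of the mass of the integral is concentrated in a shrinking neighbourhood of $t=0$ of size $\approx n^{-1/\alpha}$, where the integrand is controlled by the model integral $\int_{0}^{\infty}t^{\beta}e^{-n\omega t^{\alpha}}\,\D t$.

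First I would carry out the change of variables $s=n\omega t^{\alpha}$, so that $t=(s/(n\omega))^{1/\alpha}$ and $\D t=\tfrac{1}{\alpha}(n\omega)^{-1/\alpha}s^{1/\alpha-1}\,\D s$. Writing $S_n:=n\omega\,g(n)^{\alpha}$ for the transformed upper limit, the integral becomes
\begin{equation*}
\int_{0}^{g(n)}t^{\beta}(1-\omega t^{\alpha})^{n}\,\D t
=\frac{1}{\alpha\,\omega^{\frac{\beta+1}{\alpha}}}\,n^{-\frac{\beta+1}{\alpha}}
\int_{0}^{S_n}s^{\frac{\beta+1}{\alpha}-1}\Bigl(1-\frac{s}{n}\Bigr)^{n}\,\D s ,
\end{equation*}
so that it suffices to prove that the remaining integral tends to $\Gamma\bigl(\tfrac{\beta+1}{\alpha}\bigr)$. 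Here the two hypotheses on $g(n)$ play complementary roles: the upper bound $g(n)<\omega^{-1/\alpha}$ guarantees $S_n\le n$, so that $0\le 1-s/n$ throughout the domain and the elementary inequality $(1-s/n)^{n}\le e^{-s}$ is available, while the lower bound forces $S_n\ge\tfrac{\beta+\alpha+1}{\alpha}\ln n\to\infty$, so that the domain of integration exhausts $(0,\infty)$.

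Finally I would pass to the limit under the integral sign. Extending the integrand by zero beyond $S_n$, it is dominated for every $n$ by the fixed integrable function $s^{\frac{\beta+1}{\alpha}-1}e^{-s}$ (integrable on $(0,\infty)$ since $\tfrac{\beta+1}{\alpha}>0$), while for each fixed $s$ one has $\bigl(1-s/n\bigr)^{n}\mathbf 1_{[0,S_n]}(s)\to e^{-s}$. Dominated convergence then gives $\int_{0}^{S_n}s^{\frac{\beta+1}{\alpha}-1}(1-s/n)^{n}\,\D s\to\int_{0}^{\infty}s^{\frac{\beta+1}{\alpha}-1}e^{-s}\,\D s=\Gamma\bigl(\tfrac{\beta+1}{\alpha}\bigr)$, which is the assertion. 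The only genuine obstacle is justifying this interchange of limit and integration uniformly over the growing interval $[0,S_n]$; the explicit constant $\beta+\alpha+1$ in the hypothesis is precisely what makes the discarded tail $\int_{S_n}^{\infty}s^{\frac{\beta+1}{\alpha}-1}e^{-s}\,\D s$ decay faster than the main term, so that one could equally avoid the dominated convergence theorem and obtain the estimate by a direct split-and-bound argument.
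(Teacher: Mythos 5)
Your proof is correct, but there is nothing in the paper to compare it against: the lemma is explicitly \emph{quoted} without proof from \cite{FKV14} (formula (5.6) there; see also \cite{BFRV09} and \cite{A64}), so the paper contains no argument of its own. Your route --- substituting $s=n\omega t^{\alpha}$, using $g(n)<\omega^{-1/\alpha}$ to keep $1-s/n\ge 0$ and obtain the domination $(1-s/n)^{n}\le e^{-s}$, using the lower bound on $g(n)$ to force $S_n=n\omega\,g(n)^{\alpha}\to\infty$, and concluding by dominated convergence --- is the standard proof of such Gamma-type asymptotics and is complete as written. One point worth making explicit: your computation yields the constant $\frac{1}{\alpha}\,\omega^{-\frac{\beta+1}{\alpha}}\,\Gamma\bigl(\frac{\beta+1}{\alpha}\bigr)$, whereas the statement as printed has $\omega^{-\frac{d+1}{\alpha}}$; since $d$ is not among the data $\alpha,\beta,\omega$ of the lemma, this is a typo for $\beta+1$, and your version is the correct one. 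This is confirmed by how the paper applies the lemma in Section~5: there $\alpha=1$, $\beta=d-1$, the relevant coefficient is $\kappa_{d-1}/\kappa_d$, and the ensuing computation uses the factor $\bigl(\kappa_d/\kappa_{d-1}\bigr)^{d}$, i.e.\ the exponent $(\beta+1)/\alpha=d$ rather than $d+1$. Finally, your closing observation is accurate: for the asymptotic equivalence itself any lower bound on $g(n)$ making $S_n\to\infty$ suffices, and the specific constant $\beta+\alpha+1$ only serves to make the truncated tail decay polynomially faster than the main term, which is what the applications (where the integral is cut at $h(n)\approx(\log n/n)^{2/(d+1)}$ or $\log n/n$) require.
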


We will also use the following result from \cite{BHRS17}. Let $S^{d-1}_+$ denote the closed half-sphere which is above the coordinate hyperplane $x_d=0$. Then
\begin{equation}\label{half-sphere}
\int_{S^{d-1}_+}\cdots \int_{S^{d-1}_+}\nabla (u_1,\ldots, u_d)\; 
\D u_1\ldots \D u_d
=\left (\frac{\omega_{d+1}}{2}\right )^{d-1}.
\end{equation}
We note that \eqref{half-sphere} is a special case of a more general formula, cf. \cite[pages 7--8]{BHRS17}.

\section{Properties of ball caps}
We note that although $r$-hyperconvexity is not an affine invariant notion (balls are generally not preserved under affinities), by using a suitable homothety, one can always achieve that $r=1$. We will use this fact as a normalization in our arguments in order to simplify notation. The results for general $r$ follow by scaling. Of course, such a homothety changes $K$ as well, and thus one cannot assume at the same time that $K$ has unit volume. 

In this section we assume that $\bd K$ is $C^2$ smooth with the extra property that all principal curvatures at each point of $\bd K$ are strictly greater than $1$. Then for the Gaussian curvature it holds that $\kappa(x)>1$ for all $x\in\bd K$. Thus, there exists an $R>1$ such that $K$ slides freely in a ball of radius $R$. Furthermore, in this case, $K$ has the property that for any points $x, x'\in K$, the shorter arc of any unit circle passing through $x$ and $x'$ is contained in $K$, so $K$ is hyperconvex ($1$-hyperconvex).%, see, for example, \cite{BLNP2007} or \cite{FTGF2015}.

We will call the intersection of $K$ and the complement of an open unit ball a {\em ball cap}. Ball caps play a similar role in our arguments to usual (linear) caps of convex bodies cut off by hyperplanes. We need to establish some basic facts about ball caps that are in analogy with linear caps, most importantly, that each such cap has a well-defined vertex and height. We note that the two-dimensional case was already treated in \cite{FKV14}. Here we extend the planar statements of \cite{FKV14} (cf. Lemmas~4.1--4.3, pp. 905--906) to $d$-dimensions.

\begin{lemma}\label{unique-vertex}
Let $C=K\setminus (\mathring{B}^d+p)$, $p\in\R^d$ be a non-empty ball cap of a convex body $K$ whose boundary is $C^2$ smooth with all principal curvatures strictly greater than $1$ at every point. Then there exists a unique point $x=x(p)\in C\cap\bd K$ (the {\em vertex}) and a positive real number $t=t(p)>0$ (the {\em height}) such that $p=x-(1+t)\sigma_K(x)$. 
\end{lemma}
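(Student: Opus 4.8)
=== PROOF PROPOSAL ===

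\textbf{Setup and geometric idea.} The plan is to exploit the hypothesis that every principal curvature of $\bd K$ strictly exceeds $1$, which forces $K$ to be strictly more curved than any unit ball. Geometrically, a ball cap $C=K\setminus(\mathring B^d+p)$ is carved off by a unit ball $\mathring B^d+p$ whose bounding sphere $\bd(B^d+p)$ must meet $\bd K$. The vertex $x(p)$ should be the point of $C\cap\bd K$ that is \emph{farthest} from the center $p$ in a suitable sense; equivalently, the point where the inward translate of $\bd K$ towards $p$ last touches the cap. I would define $x(p)$ as a maximizer over $C\cap\bd K$ of the distance $|y-p|$, and then show this maximizer is unique and automatically produces the required representation $p=x-(1+t)\sigma_K(x)$.

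\textbf{Existence of the vertex.} First I would note that $C\cap\bd K$ is a nonempty compact set (it is closed and bounded, and nonempty precisely because $C$ is a nonempty ball cap, so some boundary point of $K$ lies outside $\mathring B^d+p$). Hence the continuous function $y\mapsto|y-p|$ attains its maximum at some point $x\in C\cap\bd K$; at such a maximizer the value satisfies $|x-p|\geq 1$. The key first-order observation is that at the farthest point $x$ the gradient of $|y-p|^2$ restricted to $\bd K$ must vanish, so $x-p$ is parallel to the outer unit normal $\sigma_K(x)$. Because $x$ lies in the complement of the open unit ball centered at $p$ and is the \emph{farthest} such boundary point, $x-p$ must point in the direction of the \emph{outer} normal, giving $x-p=(1+t)\sigma_K(x)$ for some $t\geq 0$, i.e.\ $p=x-(1+t)\sigma_K(x)$. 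That $t>0$ rather than $t=0$ should follow from $C$ being a genuine (nonempty, nondegenerate) cap: if $t=0$ the sphere would be tangent to $\bd K$ from outside at $x$ with no boundary points strictly beyond it, contradicting nonemptiness of the cap interior.

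\textbf{Uniqueness — the main obstacle.} The hard part will be uniqueness, and this is exactly where the curvature hypothesis is essential. Suppose two distinct points $x_1,x_2\in C\cap\bd K$ both realize the representation $p=x_i-(1+t_i)\sigma_K(x_i)$. The strategy is to compare $K$ locally with the unit ball: since all principal radii of curvature of $\bd K$ are strictly less than $1$ (the reciprocals of curvatures exceeding $1$), near each such $x_i$ the body $K$ lies \emph{inside} the ball of radius $1$ tangent to $\bd K$ at $x_i$ on the inner side. I would argue that the function $f(y)=|y-p|$ on $\bd K$, or rather the signed distance from $\bd K$ to the sphere $\bd(B^d+p)$, is \emph{strictly concave along $\bd K$} as a consequence of the comparison between the curvature of $\bd K$ and the unit curvature of the bounding sphere. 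Strict concavity along the (connected) boundary, or more carefully along geodesic arcs on $\bd K$ between two candidate vertices, rules out two distinct interior critical points that are both maxima. Concretely, I expect to use Lemma~\ref{main2} (or the underlying computation of the map $\Phi_r$) to control how the normal line $x-(1+t)\sigma_K(x)$ sweeps out points $p$: the claim is precisely that the map $\Phi_1(u,t)=\sigma_K^{-1}(u)-(1+t)u$ is injective on the relevant domain, and its Jacobian, by Lemma~\ref{main2} with $r=1$, equals $\bigl|\sum_{i=0}^{d-1}(-1)^i\binom{d-1}{i}s_{d-i-1}(u)(1+t)^i\bigr|$. Since each principal radius of curvature is strictly less than $1$, I would check that this Jacobian never vanishes for $t\ge 0$, which gives local injectivity; a global monotonicity or degree argument along each normal ray then upgrades this to the global injectivity needed for uniqueness of $(x,t)$, and hence of the vertex.

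\textbf{Completion.} Finally I would assemble the pieces: existence gives at least one pair $(x,t)$ with $p=x-(1+t)\sigma_K(x)$ and $t>0$, and injectivity of $\Phi_1$ gives that this pair is unique, so $x=x(p)$ and $t=t(p)$ are well defined. I would also record that the dependence $p\mapsto(x(p),t(p))$ is continuous (indeed $C^1$) since it is the inverse of the $C^1$ diffeomorphism $\Phi_1$ whose nonvanishing Jacobian was just verified, which will be convenient for the later integral-geometric arguments. The essential difficulty remains the uniqueness step, where the strict curvature condition (all principal radii of curvature $<1$) must be converted into a quantitative statement that the normal rays of $\bd K$, extended outward by distance $1+t$, do not cross.
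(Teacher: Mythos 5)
Your existence argument coincides with the paper's: maximize $y\mapsto|y-p|$ over $C\cap\bd K$, observe that at a maximizer $x$ the hyperplane orthogonal to $x-p$ supports $K$, so $x-p=(1+t)\sigma_K(x)$, and $t>0$ because the cap is nondegenerate. That part is sound.

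The genuine gap is in uniqueness, and neither of your two devices closes it. The ``strict concavity of $|y-p|$ along $\bd K$'' is not a tenable statement: on the closed hypersurface $\bd K$ this function always has a minimum as well as a maximum (already when $K$ is a small ball), so it cannot be concave along arbitrary connecting paths, and you give no proof of concavity along the particular paths you would need; the curvature hypothesis never enters in a checkable way. As for the second device, your Jacobian computation is correct --- by Lemma~\ref{main2} with $r=1$ one gets $|\D\Phi_1(u,t)|=\prod_{j=1}^{d-1}\bigl(1+t-r_j(u)\bigr)>0$ for $t\ge 0$ since all $r_j(u)<1$ --- but a nonvanishing Jacobian only makes $\Phi_1$ a \emph{local} diffeomorphism. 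Global injectivity of $\Phi_1$ on $S^{d-1}\times(0,\infty)$ is literally equivalent to the lemma: a point $p$ with two representations $p=x_i-(1+t_i)\sigma_K(x_i)$ is exactly a point with two $\Phi_1$-preimages. So ``a global monotonicity or degree argument'' is not a routine upgrade but the whole problem; monotonicity \emph{along each normal ray} cannot rule out two \emph{different} normal rays meeting at the same $p$, local diffeomorphisms need not be injective, and Hadamard/covering-space arguments do not apply off the shelf here (for $d=2$ the domain is an annulus, and the image is not simply connected either, e.g.\ when $K$ is a small ball, so one would still have to show the covering degree is one --- which is the lemma again).

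The missing idea is to convert the curvature hypothesis into \emph{hyperconvexity}, which is how the paper argues. Since all principal curvatures exceed $1$, $K$ is $1$-hyperconvex; in particular $K\subset B^d+x-\sigma_K(x)$ for every $x\in\bd K$. This containment shows that every vertex is a farthest point of $K$ from $p$: if $p=x-(1+t)\sigma_K(x)$ with $t>0$, then for all $y\in K$ one has $|y-p|\le|y-(x-\sigma_K(x))|+|(x-\sigma_K(x))-p|\le 1+t=|x-p|$. Hence two distinct vertices $x_1\ne x_2$ would both be at the same maximal distance $|x_1-p|=|x_2-p|>1$. Hyperconvexity then places the shorter unit-radius circular arc through $x_1,x_2$ (in the plane of $p,x_1,x_2$, with centre on the same side of the line $x_1x_2$ as $p$) inside $K$, and every interior point of that arc is strictly farther from $p$ than $x_1$, contradicting maximality. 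This two-step argument --- vertices are farthest points, and the farthest point is unique by the arc comparison --- is what your outline lacks and cannot be replaced by the local Jacobian information.
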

\begin{proof}
Let $x_1\in \bd C\cap \bd K$ be a point whose distance from $p$ is maximal. Such a point clearly exists, and it is in the interior of $\bd C\cap \bd K$ with respect to $S^{d-1}$. The hyperplane through $x_1$ and orthogonal to $x_1-p$ is a supporting hyperplane of $K$. Thus, $u_1=(x_1-p)/|x_1-p|\in S^{d-1}$ is the outer unit normal of $K$ at $x_1$ and so $x_1$ is a vertex of $C$. The converse of this statement is also true, that is, if a point $x\in\bd C\cap \bd K$ a vertex of $C$, then $x$ is a point of $C$ whose distance from $p$ is maximal. Note that, due to the assumptions, this maximal distance is larger than $1$. 

We need only check the uniqueness of the vertex. Assume, on the contrary, that there are two vertices, say, $x_1$ and $x_2$ with $x_1\neq x_2$. Since $K$ is hyperconvex, it contains the shorter unit circular arc $\gamma$ connecting $x_1$ and $x_2$ that is in the $2$-plane of $p, x_1$ and $x_2$ and whose centre is on the same side of the line $x_1x_2$ as $p$. However, since $|x_1-p|=|x_2-p|>1$, for any point $x\in\gamma\setminus \{x_1, x_2\}$ it holds that $|x-p|>|x_1-p|$, a contradiction.

\end{proof}

We introduce the following notations. For $u\in S^{d-1}$ and $t\geq 0$, let $C(u,t)$ denote the cap of height $t$ and vertex $x=\sigma_K^{-1}(u)$, and let $V(u,t)=V(C(u,t))$. 
%and $S(u,t)=\sigma_{d-1}(C(u,t)\cap (S^{d-1}+\sigma_K^{-1}(u)-(1+t)u))$. 

Let us fix $u\in S^{d-1}$ and assume that $x=\sigma_K^{-1}(u)=o$ such that the (unique) supporting
hyperplane of $K$ at $x$ is identified with $\R^{d-1}$ and $u=-e_d$. Since $\bd K$ is $C^2_+$, there exists a convex function $f$ in a sufficiently small open ball around $o$ in $\R^{d-1}$ such that $\bd K$ is the the graph of $f$ in above this neighbourhood. Then 
$$f(z)=\frac{1}{2}Q(z)+o(\|z\|^2) \text{ as } z\to 0$$
in this small neighbourhood. The quadratic form $Q$ is the second fundamental form of $\bd K$ at $x$.
Under the hypotheses of the lemma, $Q$ is positive definite. It is well-known that if we choose a suitable 
orthonormal basis $e_1,\ldots, e_{d-1}$ in $\R^{d-1}$, then
$$Q(z)=k_1z_1^2+\cdots +k_{d-1}z_{d-1}^2$$
for $z=z_1e_1+\ldots +z_{d-1}e_{d-1}\in\R^{d-1}$,  
where the quantities $k_1,\ldots, k_{d-1}$ are the principal curvatures of $\bd K$ at $x=\sigma_K^{-1}(u)$, and the directions determined by the orthonormal basis vectors $e_1,\ldots, e_{d-1}$ are the principal directions. In particular, if $w\in S^{d-2}$, then $k(w)=Q(w)$ is the {\em sectional curvature} of $\bd K$ at $x$ in the direction of $w$. Of course, $k_i=Q(e_i)$ for $i=1,\ldots, d-1$. 

\begin{lemma}
With the same hypotheses as in Lemma~\ref{unique-vertex} and with the notation introduced above, it holds
\begin{align}\label{cap-volume1}
\lim_{t\to 0^+} V(u,t)\cdot t^{-\frac{d+1}{2}}=\frac{2^{\frac{d+1}{2}}\kappa_{d-1}}{d+1}\int_{S^{d-2}}(Q(w)-1)^{-\frac{d-1}{2}} dw
\end{align}
\end{lemma}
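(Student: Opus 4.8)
=== PLAN FOR PROOF ===

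The plan is to compute the asymptotic volume of a ball cap $C(u,t)$ directly, by describing the cap near its vertex in the local graph coordinates set up just before the statement. Here the vertex is $x=\sigma_K^{-1}(u)=o$, the tangent hyperplane is $\R^{d-1}$, the outer normal is $u=-e_d$, and the boundary $\bd K$ is the graph of $f(z)=\tfrac12 Q(z)+o(\|z\|^2)$. The cap is $C(u,t)=K\setminus(\mathring B^d+p)$, where by Lemma~\ref{unique-vertex} the center of the removed unit ball is $p=x-(1+t)u=(1+t)e_d$ (in these coordinates). So the cap consists of points of $K$ lying \emph{outside} the unit ball centered at $(1+t)e_d$. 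The removed ball has boundary $\|z\|^2+(x_d-(1+t))^2=1$, i.e. locally $x_d=(1+t)-\sqrt{1-\|z\|^2}=t+\tfrac12\|z\|^2+o(\|z\|^2)$. Thus, over a point $z\in\R^{d-1}$, a fiber of the cap is the segment of $x_d$-values between the body boundary $f(z)\approx \tfrac12 Q(z)$ below and the sphere $\approx t+\tfrac12\|z\|^2$ above, and this fiber is nonempty precisely when $t+\tfrac12\|z\|^2 > \tfrac12 Q(z)$, i.e. when $\tfrac12(Q(z)-\|z\|^2) < t$.

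First I would write the volume as an iterated integral over the shadow region $\Sigma_t=\{z:\tfrac12(Q(z)-\|z\|^2)<t\}$ in $\R^{d-1}$, with fiber length equal to the difference of the two heights:
\begin{equation*}
V(u,t)=\int_{\Sigma_t}\Bigl[\bigl(t+\tfrac12\|z\|^2\bigr)-\tfrac12 Q(z)+o(\|z\|^2)\Bigr]\,\D z
=\int_{\Sigma_t}\Bigl[t-\tfrac12\bigl(Q(z)-\|z\|^2\bigr)\Bigr]\,\D z+(\text{error}).
\end{equation*}
Introduce the shifted quadratic form $\widetilde Q(z):=Q(z)-\|z\|^2=\sum_i(k_i-1)z_i^2$, which is positive definite precisely because all principal curvatures satisfy $k_i>1$ — this is exactly where the hypothesis of the lemma enters. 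Then $\Sigma_t=\{\widetilde Q(z)<2t\}$ is a small ellipsoid, and the leading term is $\int_{\widetilde Q(z)<2t}\bigl(t-\tfrac12\widetilde Q(z)\bigr)\,\D z$.

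Next I would evaluate this leading integral by scaling. Substituting $z=\sqrt{2t}\,y$ gives $\widetilde Q(z)=2t\,\widetilde Q(y)$ and $\D z=(2t)^{(d-1)/2}\D y$, so
\begin{equation*}
\int_{\widetilde Q<2t}\bigl(t-\tfrac12\widetilde Q(z)\bigr)\D z
=t\,(2t)^{\frac{d-1}{2}}\int_{\widetilde Q(y)<1}\bigl(1-\widetilde Q(y)\bigr)\,\D y
=(2t)^{\frac{d+1}{2}}\cdot\tfrac12\int_{\widetilde Q(y)<1}\bigl(1-\widetilde Q(y)\bigr)\,\D y.
\end{equation*}
This already exhibits the claimed order $t^{(d+1)/2}$. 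It then remains to convert the Euclidean integral over the ellipsoid $\{\widetilde Q<1\}$ into the spherical integral in the statement. Writing $y=\rho w$ with $w\in S^{d-2}$ and $\rho\ge0$, one has $\widetilde Q(y)=\rho^2\widetilde Q(w)$ and $\D y=\rho^{d-2}\,\D\rho\,\D w$; the radial cutoff is $\rho<\widetilde Q(w)^{-1/2}$. The radial integral $\int_0^{\widetilde Q(w)^{-1/2}}(1-\rho^2\widetilde Q(w))\rho^{d-2}\,\D\rho$ evaluates to a constant multiple of $\widetilde Q(w)^{-(d-1)/2}$, producing the factor $\int_{S^{d-2}}\widetilde Q(w)^{-(d-1)/2}\,\D w=\int_{S^{d-2}}(Q(w)-1)^{-(d-1)/2}\,\D w$. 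Matching the resulting numerical constant against the stated $\tfrac{2^{(d+1)/2}\kappa_{d-1}}{d+1}$ is then a routine check (the $\kappa_{d-1}$ and the $1/(d+1)$ arise from $\omega_{d-1}=(d-1)\kappa_{d-1}$ together with the $\rho$-integral constant $\tfrac{1}{d-1}-\tfrac{1}{d+1}=\tfrac{2}{(d-1)(d+1)}$).

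The main obstacle is \emph{rigorously controlling the error terms} so that they are genuinely lower order than $t^{(d+1)/2}$, rather than just formally dropping the $o(\|z\|^2)$ remainders. Concretely, one must show that (i) replacing $f(z)$ and the sphere by their quadratic Taylor approximations perturbs the fiber length and the shadow region $\Sigma_t$ only by a $o(t^{(d+1)/2})$ amount, and (ii) the contributions near the boundary $\partial\Sigma_t$, where the fiber length degenerates to zero and the quadratic approximation is weakest, are negligible. The standard device is to sandwich $\widetilde Q$ between $(1\pm\varepsilon)$-scalings of itself on a small ball $\|z\|<\delta(\varepsilon)$, run the exact scaling computation for each bound, and let $\varepsilon\to0$ after $t\to0$; since $\Sigma_t$ shrinks like $\sqrt t$, for $t$ small enough the whole cap lives inside $\|z\|<\delta$ and the uniform smallness of the $o(\cdot)$ terms there makes the squeeze work. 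This is where the planar argument of \cite{FKV14} (their Lemmas~4.1--4.3) must be upgraded to handle the genuinely $(d-1)$-dimensional shadow region and its anisotropic ellipsoidal shape.
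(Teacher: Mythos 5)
Your plan follows essentially the same route as the paper's own proof: the identical local setup (vertex at $o$, body as the graph of $f$, the removed sphere as the graph of $(1+t)-\sqrt{1-\|z\|^2}$), the same quadratic expansions, and the same leading integral of $t-\tfrac12\left(Q(z)-\|z\|^2\right)$ over the shadow region. The only organizational difference is that the paper passes to polar coordinates $z=\tau w$ at once and integrates radially up to the intersection radius $\tau^*(w)$ of \eqref{tau}, whereas you rescale by $\sqrt{2t}$ first and go polar afterwards; your $\varepsilon$-sandwich for the error terms is a more explicit version of the $o(\cdot)$ bookkeeping the paper leaves implicit. Up to that point the proposal is sound.

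The gap is precisely in the step you dismiss as a ``routine check,'' namely matching your constant with the one in \eqref{cap-volume1}. Your computation, carried out with the standard (unnormalized) spherical Lebesgue measure on $S^{d-2}$ --- which is the measure the paper declares for its $\D u$, $\D w$ integrals --- yields
\begin{equation*}
\lim_{t\to0^+}V(u,t)\,t^{-\frac{d+1}{2}}
=\frac{2^{\frac{d+1}{2}}}{(d-1)(d+1)}\int_{S^{d-2}}\left(Q(w)-1\right)^{-\frac{d-1}{2}}\D w,
\end{equation*}
which differs from the right-hand side of \eqref{cap-volume1} by exactly the factor $\omega_{d-1}=(d-1)\kappa_{d-1}$. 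Your parenthetical justification is circular: the identity $\omega_{d-1}=(d-1)\kappa_{d-1}$ can only convert $\tfrac{1}{(d-1)(d+1)}$ into $\tfrac{\kappa_{d-1}}{d+1}$ if a factor $\omega_{d-1}$ is present somewhere in your answer, and in your computation it never is, since the angular integration is kept explicit throughout --- nothing generates it. (In the paper's proof such a factor does appear: the volume element there is written as $\tau^{d-2}\omega_{d-1}\,\D\tau\,\D w$, so the stated formula is consistent only with reading $\D w$ as the \emph{normalized} measure on $S^{d-2}$; under the standard measure a direct $d=2$ check gives cap area $\tfrac{2^{5/2}}{3}(k-1)^{-1/2}t^{3/2}(1+o(1))$, i.e. your constant, while the literal reading of \eqref{cap-volume1} would give $\tfrac{2^{7/2}}{3}(k-1)^{-1/2}$.) So you must either adopt and state that normalization convention, or record that your (correctly computed) limit differs from the literal statement by $\omega_{d-1}$; as written, the claim that the constants match is false. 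A clean closed form for your answer follows by computing the volume of the ellipsoid $\{z:\,Q(z)-\|z\|^2\le1\}$ in two ways,
\begin{equation*}
\frac{1}{d-1}\int_{S^{d-2}}\left(Q(w)-1\right)^{-\frac{d-1}{2}}\D w
=\kappa_{d-1}\prod_{i=1}^{d-1}(k_i-1)^{-\frac12},
\end{equation*}
so your limit equals $\frac{2^{(d+1)/2}\kappa_{d-1}}{d+1}\prod_{i=1}^{d-1}(k_i-1)^{-1/2}$ --- the exact ball-cap analogue of the classical Euclidean cap-volume asymptotics, with $\det Q$ replaced by $\det(Q-I_{d-1})$.
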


\begin{proof}
Now, let $z=\tau w$, where $\tau\geq 0$ and $w\in S^{d-2}$. Assume that $\R^d=\R^{d-1}\times \R$ and that $e_d$ is unit vector such that $e_1,\ldots, e_d$ is an orthonormal basis of $\R^d$ that has a positive orientation, and $e_d=-\sigma_K(x)$, that is, $e_d$ is the inner unit normal of $\bd K$ at $x$. For $t>0$, the sphere $S^{d-1}+(1+t)e_d$ determines a cap of $K$, that has height $t$ and vertex $x$. In a sufficiently small neighbourhood of $o$, the lower hemisphere of $S^{d-1}+(1+t)e_d$ is the graph of the function
$$g_t(\tau w)=\tau w+(1+t-\sqrt{1-\tau^2})e_d.$$  
It is not difficult to check that for a fixed $w\in S^{d-2}$, the intersection point $\tau^*(w)$ of $\bd K$ and the sphere $S^{d-1}+(1+t)e_d$ satisfies 
\begin{equation}\label{tau}
\tau^*(w)=\sqrt{\frac{2}{k(w)-1}}t^{1/2}+o(t^{1/2})\text{ as } \tau\to 0^+.
\end{equation}
Thus,
$$V(u,t)=\int_{S^{d-2}}\int_{0}^{\tau^*(w)}(g_t(\tau w)-f(\tau w))\tau^{d-2}\omega_{d-1}\D \tau \D w.$$
Using Taylor's theorem, we obtain that
\begin{align*}
V(u,t) & =\omega_{d-1}\int_{S^{d-2}}\int_0^{\tau^*(w)}
\left (t+\frac{\tau^2}{2}-\frac{k(w)\tau^2}{2}+o(\tau^2)\right )\tau^{d-2}\D \tau \D w\\
& =\omega_{d-1}\int_{S^{d-2}}\left (\frac{t\tau^{d-1}}{d-1}+\frac{\tau^{d+1}}{2(d+1)}-\frac{k(w)\tau^{d+1}}{2(d+1)}\right |_0^{\tau^*(w)}+o(\tau^{d+1})\, \D w\\
& =\frac{2^{\frac{d+1}{2}}\kappa_{d-1}}{d+1}t^{\frac{d+1}{2}}\int_{S^{d-2}}\left (k(w)-1\right )^{-\frac{d-1}{2}}dw+o\left (t^{\frac{d+1}{2}}\right ) \text{ as } t\to0^+,
\end{align*}
which completes the proof. 
\end{proof}

%We note that \eqref{cap-volume1} is a generalization of (10) on page 2290 in \cite{BFRV09} as $\int_{S^{d-2}}Q(w)^{-(d-1)/2}\D w=\kappa(x)^{-1/2}$.

For a hyperconvex body $K\subset\R^d$, we say that the points $x_1,\ldots, x_d\in K$ are in {\em hyperconvex position} if there exists a $p\in\R^d$ such that $x_{i}\in S^{d-1}+p$ for all $i=1,\ldots, d$. Note that for $d=2$ this is always the case, however, for $d\geq 3$ it is not necessarily so. To see this, one may think of three points in $K$ such that one of them is contained in the interior of the spindle spanned by the other two points. 

Let the points $x_1,\ldots, x_{d}\in K$ be in hyperconvex position. Observe that, unless they are on a great subsphere of a unit sphere, they are on exactly two unit spheres $S^{d-1}+p_-$ and $S^{d-1}+p_+$, and thus determine exactly two distinct (ball-)caps of $K$, namely $C_-(x_1, \ldots, x_{d})=K\setminus (\mathring{B}^d+p_-)$, and $C_+(x_1, \ldots, x_{d})=K\setminus (\mathring{B}^d+p_+)$. If the points are on a great subsphere of a unit sphere, then the two caps coincide.
We may assume without loss of generality that $V(C_-(x_1,\ldots, x_d))\leq V(C_+(x_1,\ldots, x_d))$. For the sake of brevity, henceforth, we will use the following shorthand notations $V_-(x_1,\ldots, x_d)=V(C_-(x_1,\ldots, x_d))$ and $V_+(x_1, \ldots, x_d)=V(C_+(x_1,\ldots, x_d))$.

\begin{lemma}\label{delta-cap}
Let $K\subset\R^d$ be a convex body with $C^2$ boundary and all principal curvatures strictly greater than $1$ at every boundary point. Let $x_1, \ldots, x_d\in K$ be arbitrary points that are in hyperconvex position. With the above hypotheses and notation, there exists a constant $\delta>0$, depending only on $d$ and $K$, such that $V_+(x_1,\ldots, x_d)>\delta$. 
\end{lemma}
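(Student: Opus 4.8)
The plan is to argue by contradiction via compactness. After normalising $r=1$ (as at the beginning of this section), suppose no such $\delta$ exists; then there is a sequence of $d$-tuples $x_1^{(k)},\dots,x_d^{(k)}\in K$ in hyperconvex position with $V_+(x_1^{(k)},\dots,x_d^{(k)})\to 0$, with associated centres $p_-^{(k)},p_+^{(k)}$. Since the points lie in the compact set $K$ and each centre is at distance $1$ from points of $K$, all these quantities are bounded, so after passing to a subsequence I may assume $x_i^{(k)}\to x_i^*\in K$ and $p_\pm^{(k)}\to p_\pm^*$. Because $V_-\le V_+$, both caps shrink; and since $p\mapsto \vol(K\setminus(\mathring B^d+p))$ is continuous, I obtain $\vol(K\setminus(\mathring B^d+p_\pm^*))=0$, i.e. $K\subseteq B^d+p_\pm^*$ for both signs.

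Next I extract rigidity from the limit. Each $x_i^*$ satisfies $|x_i^*-p_\pm^*|=1$ while $K\subseteq B^d+p_\pm^*$; hence $x_i^*\in\bd K$ and the ball $B^d+p_\pm^*$ is tangent to $\bd K$ at $x_i^*$. As $\bd K$ is $C^2$ with a single outer normal $\sigma_K(x_i^*)$, this normal must coincide with the outer normal of the sphere, so $\sigma_K(x_i^*)=x_i^*-p_-^*=x_i^*-p_+^*$. Consequently $p_-^*=p_+^*=:p^*$ and $u_i:=\sigma_K(x_i^*)=x_i^*-p^*$ for every $i$. Now I bring in the strict curvature hypothesis: since all principal curvatures exceed $1$, all radii of curvature are bounded by some $R_0<1$, so $K$ slides freely in a ball of radius $R_0$; equivalently, at each $x_i^*$ there is an outer supporting ball $R_0B^d+q_i\supseteq K$ with $q_i=x_i^*-R_0u_i=p^*+(1-R_0)u_i$. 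Inserting the remaining limit points, $x_j^*=p^*+u_j\in K\subseteq R_0B^d+q_i$ gives $|u_j-(1-R_0)u_i|^2\le R_0^2$, which simplifies to $2(1-R_0)\bigl(1-\langle u_i,u_j\rangle\bigr)\le 0$. As $R_0<1$ this forces $\langle u_i,u_j\rangle=1$, i.e. $u_i=u_j$ for all $i,j$; hence all limit points coincide at a single boundary point $x^*$, with $p^*=x^*-\sigma_K(x^*)$.

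The hard part is the final step: excluding this degenerate limit, in which the whole $d$-tuple collapses to one boundary point $x^*$ while the two unit spheres through it have centres tending to $x^*-\sigma_K(x^*)$. Writing the two centres as $p_\pm^{(k)}=c^{(k)}\pm\sqrt{1-\rho_k^2}\,\nu^{(k)}$, where $c^{(k)}$ is the circumcentre, $\rho_k\le 1$ the circumradius, and $\nu^{(k)}$ the normal to the affine hull of the points, the conclusion $p_-^{(k)},p_+^{(k)}\to p^*$ forces $\rho_k\to 1$. I would close the argument by a quantitative second-order comparison at $x^*$: because the normal curvatures of $\bd K$ are bounded below by $\lambda_{\min}>1$ while the tangent unit sphere has curvature $1$, the osculating paraboloid of $\bd K$ lies strictly inside that of $B^d+p^*$, and a uniform version of this gap should show that $d$ distinct points of $K$ collapsing to $x^*$ on a common sphere cannot have circumradius tending to $1$ — contradicting $\rho_k\to 1$. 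This is precisely where the strict inequality on the curvatures (as opposed to merely $\ge 1$) is indispensable, and turning the infinitesimal inclusion $K\subseteq B^d+p^*$ into a uniform estimate under the parabolic blow-up at $x^*$ is the main technical obstacle. The explicitly computable case $K=rB^d$ is the natural sanity check, and the local quadratic model there should both guide the general estimate and pin down the dependence of $\delta$ on $K$ and $d$.
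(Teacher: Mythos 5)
Your first two steps are correct, and they are actually more careful than the paper's own proof, which disposes of the lemma in two sentences: it asserts that $K\cap(B^d+p_-)\cap(B^d+p_+)$ can never cover $K$ and that ``by compactness'' the volume of $K\setminus\bigl((B^d+p_-)\cap(B^d+p_+)\bigr)$ is uniformly bounded below. That compactness assertion is valid only if the continuous function $(p_-,p_+)\mapsto \vol\bigl(K\setminus((B^d+p_-)\cap(B^d+p_+))\bigr)$ remains positive on the \emph{closure} of the set of admissible centre pairs, and your rigidity computation identifies exactly the boundary case where it does not: pairs with $p_-=p_+=x^*-\sigma_K(x^*)$ for some $x^*\in\bd K$, where the missed volume is $0$. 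So the degenerate limit you isolate as the ``hard part'' is precisely the case the paper's argument silently skips.

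Unfortunately, the gap you flag cannot be closed, because the degenerate limit is attained by genuine configurations: the lemma as stated is false for $d\ge 3$. Take $K=\frac12 B^d$ (all principal curvatures equal $2>1$), fix a small $\theta>0$, set $\epsilon=\theta^2$ and $p_\pm=(\frac12+\epsilon)(\cos\theta\, e_1\pm\sin\theta\, e_2)$. The unit spheres $S^{d-1}+p_\pm$ meet in a $(d-2)$-sphere of radius $\rho'=\sqrt{1-(\frac12+\epsilon)^2\sin^2\theta}$ centred at $m=(\frac12+\epsilon)\cos\theta\, e_1$ in the hyperplane $\langle x,e_2\rangle=0$; its nearest point to $o$ is at distance $\rho'-|m|=\bigl(1-(\tfrac12+\epsilon)^2\bigr)/(\rho'+|m|)<\frac12$ for small $\theta$, so a relatively open piece of this $(d-2)$-sphere lies in the interior of $K$. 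Choose $d$ affinely independent points $x_1,\dots,x_d$ on that piece: they are in hyperconvex position, and since their circumsphere is exactly this $(d-2)$-sphere with $\rho'<1$, the two unit spheres through them are exactly $S^{d-1}+p_\pm$. Both caps $K\setminus(\mathring{B}^d+p_\pm)$ have height exactly $\epsilon$ (with vertex $-\frac12 p_\pm/|p_\pm|$), hence volume of order $\epsilon^{(d+1)/2}=\theta^{d+1}\to 0$ by the cap-volume asymptotics; thus $V_+\to 0$ and no $\delta$ can exist. (As $\theta\to 0$ this family realizes exactly the collapse you derived: all points tend to $x^*=(-\frac12,0,\dots,0)$ and $p_\pm\to x^*-\sigma_K(x^*)$.) The mechanism is the one your parametrization makes visible: although any $d$ points of $K$ lie in a ball of radius $R_0<1$, their \emph{circumradius} $\rho_k$ --- the radius of the $(d-2)$-sphere through them --- is not bounded by $R_0$, since a nearly flat $(d-1)$-simplex inside a tiny ball can have circumradius arbitrarily close to $1$, and this is what permits $|p_+^{(k)}-p_-^{(k)}|=2\sqrt{1-\rho_k^2}\to 0$. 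For $d=2$ the circumradius of two points is half their distance, hence at most $R_0$, which is why the planar statement in \cite{FKV14} is correct; the failure is genuinely a $d\ge3$ phenomenon. Consequently no second-order comparison at $x^*$ can succeed, and neither your outline nor the paper's proof establishes the lemma; what can be salvaged is a weaker statement (e.g.\ that the set of configurations with $V_+<\delta$ has small measure, or, for the application in Section~6, bounding the $V_+$-term on $\{V_+<\delta\}$ by the corresponding $V_-$-term), which is what the order-of-magnitude claim in Theorem~\ref{main-smooth} actually requires.
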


\begin{proof} 
Since  $\bd K$ is $C^2$ and all sectional curvatures are strictly larger than $1$ at each $x\in\bd K$, the intersection $K\cap(B^d+p_-)\cap (B^d+p_+)$ can never cover $K$. By compactness, there exists $\delta>0$, depending only on $K$, such that $V(K\setminus ((B^d+p_-)\cap (B^d+p_+)))>2\delta$, 
%As
%$K=C_-(x_1,\ldots, x_d)\cup C_+(x_1,\ldots, x_d)\cup ((B^d+p_-)\cap (B^d+p_+))$, 
from which the statement of the lemma follows easily. 
\end{proof}

%For $r>0$, we say that a ball of radius $r$ {\em rolls freely} in $K$ if for any $x\in\bd K$, there exists an $y\in\R^d$ such that $x\in rB^d+y$ and $rB^d+y\subset K$. 
%The assumptions that $\bd K$ is $C^2$ smooth and that it slides freely in $B^d$ guarantee that $K$ has a rolling ball of radius $0r=r(K)$ with $0<r<1$. 

%Furthermore,
%let $x\in\bd K$. Since all principal curvatures are strictly greater than $1$ at $x$ of $K$, there exists a $\varrho=\varrho(x)$ with $0<\varrho<1$ such that there is a sphere of radius $\varrho$ which supports $\bd K$ locally in a neighbourhood of $x$.

\section{General statements on facet numbers}
In this section, we derive some general statements about the expectation of facet numbers that will be used in the proofs of Theorems~\ref{main-sphere} and \ref{main-smooth}. As we have already noted before, although the problem is not affine invariant, we may and do assume by a suitable homothety that $r=1$. Let $K\subset\R^d$ be a convex body with $C^2$ smooth boundary that slides freely in the unit ball $B^d$ and hence hyperconvex. Let $V=V(K)$.

Let $X_n=\{x_1,\ldots, x_n\}$ be a sample of $n$ i.i.d. random points from $K$ selected according to the uniform probability distribution. The intersection
$$K_{(n)}:=\bigcap_{{y\in \R^d,}\atop {X_n\subset B^d+y}} (B^d+y)$$
is a random ball-polytope contained in $K$. %Let $f_{d-1}(K_{(n)})$ denote the number of facets of $K_{(n)}$. %whose vertices form a $(d-1)$-dimensional simplex or, equivalently, a $(d-1)$-dimensional spherical simplex. %Clearly, for all $2\leq k\leq d-1$, there are also facets of $K_{(n)}$ whose vertices form a $k$-dimensional simplex.  

%Let $V=V(K)$. Notice that a homothety whose ratio is not $1$ changes the radius of the generating balls of $K^r_{(n)}$. Thus, we may not assume at the same time that $r=1$ and $V(K)=1$. However, without loss of generality, we may and do assume that $r=1$ as a simplification. Subsequently, we use the notation $K_{(n)}$ for the random ball-polytope $K^1_{(n)}$. The results for general $r$ follow by scaling.

Observe that the probability that any subset of at least $d+1$ of the random points $x_1,\ldots, x_n$ are on a unit sphere $S^{d-1}+p$ is $0$. Furthermore, for any $1\leq i_1<\cdots<i_d\leq n$, the probability that the random points $x_{i_1},\ldots, x_{i_d}$ are on a unique unit sphere $S^k+p$, where $k\leq d-2$ is also $0$. Therefore, if $x_{i_1},\ldots, x_{i_d}$ are on a unit sphere $S^{d-1}+p$, then they are in hyperconvex position with probability $1$, and they span a spherical $(d-1)$-simplex.  

A $d$-tuple $x_{i_1}, \ldots, x_{i_d}$ of points from $X_n$ determines a facet of $K_{(n)}$ if and only if $x_{i_1}, \ldots, x_{i_d}$ are in hyperconvex position and all other points of $X_n$ fall into the complement of one (or both) of the caps $C_+(i_1,\ldots, i_d)$ and $C_-(i_1,\ldots, i_d)$. By the above it is clear that facets of $K_{(n)}$ are spherical $(d-1)$-simplices. Due to the independence of the points in $X_n$, we may assume that $x_{i_1}=x_1, \ldots x_{i_d}=x_d$. 

Let us define the following event
$$A:=\{x_1,\ldots, x_d\text{ are in hyperconvex position}\},$$
and let 
$$\eta_d(K):=\PP(A).$$
Clearly, $\eta_d(K)$ depends only on the dimension $d$ and the convex body $K$. Furthermore, $\gamma_2(K)=1$ for any $K$, and $\eta_d(K)<1$ for all $d\geq 3$ and $K$.

Let $I\subset \{1,\ldots, n\}$ with $|I|=d$, and let $X_I=\{x_i: i\in I\}$. Then, by the independence of $x_1,\ldots x_n$, it holds that
\begin{align}\label{expectation-formula}
\E f_{d-1}(K_{(n)})
&=\sum_{I}\frac{1}{V^n}\int_{K^n}\mathbf{1}(X_I\text{ determines a facet of } K_{(n)})\,\D x_1\ldots \D x_n\notag\\
&={n\choose d}\frac{1}{V^n}\int_{K^n}
{\bf 1}(x_1,\ldots, x_d\text{ determine a facet of } K_{(n)})\,\D x_1\ldots \D x_n\notag\\
&={n\choose d}\frac{1}{V^n}\int_{K^n}
{\bf 1}(x_1,\ldots, x_d\text{ are in hyperconvex position})\notag\\
&\quad\times {\bf 1}(x_{d+1},\ldots, x_n\in C_-(x_1,\ldots, x_d)\text { or } %x_{d+1},\ldots, x_n\in 
C_+(x_1,\ldots, x_d))\,
\D x_1\ldots \D x_n\notag\\
&={n\choose d}\frac{1}{V^d}\int_{K^d}{\bf 1}(A)[\PP(x_{d+1},\ldots, x_n\in C_-(x_1,\ldots, x_d))\notag\\
&\quad+\PP(x_{d+1},\ldots, x_n\in C_-(x_1,\ldots, x_d))]\, \D x_1\ldots \D x_d\notag\\
&={n\choose d}\frac{1}{V^d}\int_{K^d}
\mathbf{1}(A)\left [\left (1-\frac{V_+(x_1,\ldots, x_d)}{V}\right )^{n-d}\right.\notag\\
&\quad+\left.\left (1-\frac{V_-(x_1,\ldots, x_d)}{V}\right )^{n-d}\right ]\, \D x_1\ldots \D x_d,
\end{align}
where $\mathbf{1}(\cdot)$ denotes the indicator function of an event.

We will see that the unit ball $B^d$ is special in the sense that the $\E f_{d-1}(B^d_{(n)})$ approaches a finite
limit as $n$ tends to infinity. A similar phenomenon was pointed out in the paper
by Fodor, Kevei and V\'\i gh, cf.  \cite[Theorem~1.3 (1.7) on p. 902]{FKV14} in the case when $d=2$. %Here we treat the the case of general $d$.

%We consider
%\begin{align}
%\lim_{n\to\infty} &\E(f_{d-1}^*(B^d_{(n)}))\notag\\
%=\lim_{n\to\infty}& \kappa_d^{-d}{n\choose d}\int_{B^d}\cdots \int_{B^d}
%\left (1-\frac{V_+(x_1,\ldots, x_d)}{\kappa_d}\right )^{n-d} dx_1\cdots dx_d.\notag
%\end{align}

\section{Proof of Theorem~\ref{main-sphere} --- The case of the unit ball}
It is clear that in this case it enough to evaluate the integral \eqref{expectation-formula} for $K=B^d$, $r=1$, as the statement of the theorem is invariant to scaling.

Consider the cap $C(u,t)$ of $B^d$ with vertex $u\in S^{d-1}$ and height $0<t<2$. In this case
$C(u,t)=B^d\setminus (\mathring{B}^d-tu)$. Let $u=e_d$. Elementary geometry shows that
the distance of the set $S^{d-1}\cap (S^{d-1}-tu)$ (which is  a $(d-2)$-sphere) from
the hyperplane $x_d=0$ is $t/2$. Therefore, the volume $V(u,t)$ of $C(u,t)$ is equal to 
the volume of a centred spherical plank, that is, the volume of the intersection of $B^d$
with a plank (the part of space between two parallel hyperplanes) of width $t$ and symmetric to $o$. Therefore, 
\begin{equation}\label{cap-volume}
\lim_{t\to 0^+}\frac{1}{t}\cdot V(u,t)=\kappa_{d-1}.
\end{equation}
 
From \eqref{expectation-formula}, we obtain
\begin{align}\label{sphere:integral}
\lim_{n\to\infty} \E f_{d-1}(B^d_{(n)})
&=
\lim_{n\to\infty}\frac{1}{\kappa_d^d}{n\choose d}\int_{B^d}\ldots\int_{B^d}\left [\left (1-\frac{V_+(x_1,\ldots,x_d)}{\kappa_d}\right )^{n-d}\right.\\
&\left.\quad+\left (1-\frac{V_-(x_1,\ldots,x_d)}{\kappa_d}\right )^{n-d}\right ]\mathbf{1}(A)\, \D x_1\ldots \D x_d.\notag
\end{align}

If the random points $x_1,\ldots, x_d$ are in hyperconvex position, then we can rewrite \eqref{sphere:integral} with the help of the maps $T$ and $\Phi_1$ (see \eqref{T} and \eqref{Phi}) as follows. Let  $u, u_1,\ldots, u_d\in S^{d-1}$ and $0\leq t\leq 2$ such that
$$T(\Phi_1(u,t),u_1,\ldots, u_d)=(x_1,\ldots, x_d).$$
% \textcolor{red}{This map is bijective with the exception of a set of measure zero, and differentiable.??????? }
When $K=B^d$ the Jacobian of the map $\Phi=\Phi_1$ is $|\D \Phi(u,t)|=t^{d-1}$.

Let $S(u,t)=B^d\cap (S^{d-1}-tu)$. Using $\Phi(u,t)$, Lemma~\ref{mainlemma}, and the symmetries of $S^{d-1}$, we get 
\begin{align*}
\lim_{n\to\infty} \E f_{d-1}(B^d_{(n)})&=
%\lim_{n\to\infty}\frac{1}{\kappa_d^d}{n\choose d}\int_{S^{d-1}}\int_0^{2}\int_{S(u,t)}\ldots\int_{S(u,t)}
%\left (1-\frac{V(u,t)}{\kappa_d}\right )^{n-d}\\
%&\quad \times \left |\sum_{i=0}^{d-1}(-1)^j{d-1\choose j}s_{d-j-1}(1+t)^j\right |
%\nabla_d(u_1,\ldots,u_d)\D u_1\ldots \D u_d \D t\D u\\
%&=
\lim_{n\to\infty}\frac{\eta_d}{\kappa_d^d}{n\choose d}\int_{S^{d-1}}\int_0^{2}\int_{S(u,t)}\ldots\int_{S(u,t)}
\left (1-\frac{V(u,t)}{\kappa_d}\right )^{n-d}\\
&\quad\quad\quad\times t^{d-1}
\nabla_d(u_1,\ldots,u_d)\, \D u_1\ldots \D u_d \D t\D u,
\end{align*}
where $\eta_d:=\eta_d(B^d)$.

We now split the domain of integration in the variable $t$. It is sufficient to integrate on the interval $[0, h(n)]$, where $h(n)$ is a sequence defined below. This is a standard technique in such approximation problems. For a similar argument see, for example, \cite[Lemma~5.1 on p. 908]{FKV14}.
Let $h(n)=c\log n/n$, where $c$ is a suitable constant. There exist an $n_0>d$ and $\gamma_1>0$ such that for $n>n_0$ it holds that $h(n)<2$ and for all $h(n)\leq t<2$, $V(u,t)>\gamma_1h(n)$ for any $u\in S^{d-1}$.

Since $\nabla_d(u_1,\ldots,u_d)\leq 1$ for any $u_1,\ldots, u_d\in S^{d-1}$, it follows that
\begin{align*}
&\int_{h(n)}^{2}\int_{(S(u,t))^d}
\left (1-\frac{V(u,t)}{\kappa_d}\right )^{n-d}t^{d-1}
\nabla_d(u_1,\ldots,u_d) \D u_1\ldots \D u_d \D t\\
&\leq2^{d-1}\omega_d^d\int_{h(n)}^{2}
\left (1-\frac{\gamma_1 h(n)}{\kappa_d}\right )^{n-d} \D t\\
&=2^{d-1}\omega_d^d\int_{0}^{2}
\left (1-\frac{\gamma_1 c \log n/n}{\kappa_d}\right )^{n-d} \D t\\
&\leq 2^d\omega_d^d n^{-\frac{c\gamma_1(n-d)}{n\kappa_d}}.	
\end{align*}	
Thus, if $c>d\kappa_d/\gamma_1$, then
\begin{align*}
	\lim_{n\to\infty}&\frac{\eta_d}{\kappa_d^d}{n\choose d}\int_{S^{d-1}}\int_{h(n)}^{2}\int_{(S(u,t))^d}
	\left (1-\frac{V(u,t)}{\kappa_d}\right )^{n-d}t^{d-1}
	\nabla_d(u_1,\ldots,u_d) \D u_1\ldots \D u_d \D t\D u\\
	&\leq\lim_{n\to\infty}\frac{\eta_d}{\kappa_d^d}{n\choose d} 2^d\omega_d^{d+1} n^{-\frac{c\gamma_1(n-d)}{n \kappa_d }} \\
	&=0.
\end{align*}
%\end{proof}

We define the sequence 
\begin{equation*}
\theta_n(u)=\frac{\eta_d}{\kappa_d^d}{n\choose d}\int_0^{h(n)}\int_{(S(u,t))^d}
\left (1-\frac{V(u,t)}{\kappa_d}\right )^{n-d}t^{d-1}
\nabla_d(u_1,\ldots,u_d) \D u_1\ldots \D u_d \D t.
\end{equation*}
As $\theta_n(u)$ is independent of $u\in S^{d-1}$, we may use the simplified notation $\theta_n(u)=\theta_n$. 
Then
\begin{align*}
\lim_{n\to\infty}\E( f_{d-1}(B^d_{(n)}))&=\lim_{n\to\infty}\int_{S^{d-1}}\theta_n(u)\,du=\omega_d\lim_{n\to\infty}\theta_n.
\end{align*}

Let $\varepsilon\in (0,1)$. From \eqref{half-sphere} and \eqref{cap-volume}, it follows that there exists a $0<t_2<2$ such that 
\begin{equation}
(1-\varepsilon)\left (\frac{\omega_{d+1}}{2}\right )^{d-1}\!\!\!\!<\int_{(S(u,t))^d}
\!\nabla_d(u_1,\ldots,u_d)\, \D u_1\ldots \D u_d<(1+\varepsilon)\left (\frac{\omega_{d+1}}{2}\right )^{d-1},
\end{equation}
%and
\begin{equation}
(1-\varepsilon)t\kappa_{d-1}<V(u,t)<(1+\varepsilon)t\kappa_{d-1}
\end{equation}
for all $t\in (0, t_2)$. Since $\varepsilon$ is arbitrary, we get that
\begin{align*}
\omega_d\lim_{n\to\infty} \theta_n&=
\frac{\eta_d\omega_d}{\kappa_d^d}\left (\frac{\omega_{d+1}}{2}\right )^{d-1}\lim_{n\to\infty}{n\choose d}\int_0^{h(n)}
\left (1-t\frac{\kappa_{d-1}}{\kappa_d}\right )^{n-d}t^{d-1}\D t\\
&=\frac{\eta_d}{\kappa_d^{d-1}}\left (\frac{\omega_{d+1}}{2}\right )^{d-1}\frac{1}{(d-1)!}\lim_{n\to\infty} n^d\int_0^{h(n)}
\left (1-t\frac{\kappa_{d-1}}{\kappa_d}\right )^{n-d}t^{d-1}\D t.
\end{align*}

Now, with $\alpha=1$, $\beta=d-1$ and $\omega=\kappa_d/\kappa_{d-1}$, we obtain from \eqref{betaintegral} that
\begin{align*}
\omega_d\lim_{n\to\infty} \theta_n&=
\frac{\eta_d}{\kappa_d^{d-1}}\left (\frac{\omega_{d+1}}{2}\right )^{d-1}\left (\frac{\kappa_d}{\kappa_{d-1}}\right )^d\frac{1}{(d-1)!}\Gamma(d)\\
&=\eta_d\left (\frac{\omega_{d+1}}{2}\right )^{d-1}\frac{\kappa_d}{\kappa_{d-1}^d}\\
&=\eta_d\frac{\kappa_d\kappa_{d+1}^{d-1}(d+1)^{d-1}}{\kappa_{d-1}^d2^{d-1}}.
\end{align*}
Taking into account that $\kappa_{d+1}/\kappa_{d-1}=2\pi/(d+1)$, we get that
\begin{align*}
\lim_{n\to\infty} \E f_{d-1}(B^d_{(n)})
%&=\omega_d\lim_{n\to\infty} \theta_n\\
=\eta_d\frac{\pi^{d-1}\kappa_d}{\kappa_{d-1}},
\end{align*}
which finishes the proof of Theorem~\ref{main-sphere}.

\section{Outline of the proof of Theorem~\ref{main-smooth}}\label{section:general}
Since some of the arguments are similar to those in the proof of Theorem~\ref{main-sphere}, we only give a limited amount of details.

Again, we may assume by a suitable homothety that $r=1$. Let $k>1$ be a fixed number and
$K\subset\R^d$  a convex body with $C^2$ smooth boundary such all sectional curvatures at each boundary point $x\in\bd K$ larger than $k$. In this case $K$ is hyperconvex.

First, we note that it is enough to consider the term of the integral \eqref{expectation-formula} which contains $V_-(x_1,\ldots, x_d)$, that is, the smaller ball-cap, as the contribution of the other term (the larger cap) is negligible. Indeed, for any fixed $\alpha$, it follows from  Lemma~\ref{delta-cap} that 
\begin{align*}
	&\lim_{n\to\infty}n^{\alpha}{n\choose d}\frac{1}{V^d}\int_{K^d}
	\left (1-\frac{V_+(x_1,\ldots, x_d)}{V}\right )^{n-d}\mathbf{1}(A)\, \D x_1\ldots \D x_d\\
	&\quad \leq\lim_{n\to\infty}n^{\alpha}{n\choose d}\frac{\eta_d(K)}{V^d}\int_{K^d} e^{-\frac{\delta}{V}(n-d)} \D x_1\ldots \D x_d\\
	&\quad=0.
\end{align*}
By a similar argument, one can easily verify that it is sufficient to integrate over such $d$-tuples $x_1,\ldots, x_d$ in hyperconvex position for which $V_-(x_1,\ldots,x_d)<\delta$. Thus,
\begin{align*}
	\lim_{n\to\infty}\E f_{d-1}(K_{(n)})\cdot n^{-\frac{d-1}{d+1}}&=
	\lim_{n\to\infty}n^{-\frac{d-1}{d+1}} {n\choose d}\frac{1}{V^d}\int_{K^d}
	\left (1-\frac{V_-(x_1,\ldots, x_d)}{V}\right )^{n-d}\\
	&\quad\quad\times {\bf 1}(V_-(x_1,\ldots,x_d)<\delta)\mathbf{1}(A)\,\D x_1\ldots \D x_d
\end{align*}
Now, we reparametrize the $d$-tuples $x_1,\ldots, x_d$ that are in hyperconvex position
using the function 
$$
T(\Phi (u,t), u_1,\ldots, u_d)=(x_1,\ldots, x_d),
$$
for $u,u_1,\ldots, u_d\in S^{d-1}$ and $t\in \R_+$. 
For $u\in S^{d-1}$ and $t>0$, let 
$$C(u,t)=K\setminus(\mathring{B}^{d}+\Phi(u,t)),$$
$$S(u,t)=K\cap (S^{d-1}+\Phi(u,t)),$$
and $V(u,t)=\vol (C(u,t))$.
Let
\begin{align*}
	\psi(u,t)=\int_{S(u,t)}\ldots\int_{S(u,t)}\nabla_d(u_1,\ldots,u_d)
	\D u_1\ldots \D u_d.
\end{align*}
Further, let
$$s(u,t)=\left |\sum_{i=0}^{d-1} (-1)^i{d-1\choose i}s_{d-i-1}(u)(1+t)^i\right |.$$
By the $C_+^2$ property of $\bd K$, there exists a $0<t_1$ such that $V_-(u,t)\geq \delta$ for all $t_1\leq t$ and $u\in S^{d-1}$.
Using Lemmas~\ref{mainlemma} and \ref{main2}, we have that
\begin{align*}
	\lim_{n\to\infty}&\E f_{d-1}(K_{(n)})\cdot n^{-\frac{d-1}{d+1}}\\
	&=\lim_{n\to\infty}\!n^{-\frac{d-1}{d+1}} {n\choose d}\frac{\eta_d(K)}{V^d}
	\int_{S^{d-1}}\int_{0}^{t_1}%\int_{S(u,t)^d}
	%\ldots\int_{S(u,t)}
	\!\left (1-\frac{V(u,t)}{V}\right )^{n-d}
	%\times\\
	%&\quad \times\left |\sum_{i=0}^{d-1} (-1)^i{d-1\choose i}s_{d-i-1}(u)(1+t)^i\right |\nabla_d(u_1,\ldots,u_d)
	s(u,t)\psi(u,t)
	\D u \D t%\D u_1\ldots \D u_d.
\end{align*}
The domain of integration with respect to $t$ can be split further in a standard way similarly to the case of the unit ball. Let $h(n)=(c\ln n/n)^{2/(d+1)}$, for some constant $c>(V/\gamma_2)(d^2+1)/(d(d+1))$. There exist $\gamma_2>0$ and $n_0\in{\mathbb N}$ such that for all $n>n_0$, $h(n)< t_1$, and $V(u,t)>\gamma_2 h(n)^{(d+1)/2}$ for all $h(n)\leq t\leq t_1$ and $u\in S^{d-1}$. Since 
$\nabla_d(u_1,\ldots,u_d)\leq 1$ for all $u_1,\ldots, u_d\in S^{d-1}$, and all the $s_i$ functions are uniformly bounded above, we get that for some suitable constants $\gamma_3, \gamma_4>0$, 
\begin{align*}
	\lim_{n\to\infty}n^{-\frac{d-1}{d+1}} &{n\choose d}\frac{\eta_d(K)}{V^d}
	\int_{S^{d-1}}\int_{h(n)}^{t_1}%\int_{S(u,t)^d}
	%\ldots\int_{S(u,t)}
	\left (1-\frac{V(u,t)}{V}\right )^{n-d}
	%\times\\
	%&\quad\times\left |\sum_{i=0}^{d-1} (-1)^i{d-1\choose i}s_{d-i-1}(u)(1+t)^i\right |\nabla_d(u_1,\ldots,u_d)
	\!\!\!\!\!\!\! s(u,t)\psi(u,t)
	\D u \D t%\D u_1\ldots \D u_d
	\\
	&\leq \gamma_3 \lim_{n\to\infty}n^{-\frac{d-1}{d+1}} {n\choose d}\frac{1}{V^d}
	\int_{S^{d-1}}\int_{h(n)}^{t_1}\left (1-\frac{\gamma_2 c\ln n/n}{V}\right )^{n-d}\D t\D u\\
	&\leq \gamma_4\lim_{n\to\infty}n^{-\frac{d-1}{d+1}} {n\choose d}n^{-\gamma_2c/V}\\
	&= 0.
\end{align*}
%Thus, 
%\begin{align*}
%\lim_{n\to\infty}&\E (f^*_{d-1}(K_{(n)}))n^{-\frac{d-1}{d+1}}\\
%&=\lim_{n\to\infty}n^{-\frac{d-1}{d+1}}{n\choose d}\frac{1}{V^d}
%\int_{S^{d-1}}\int_{0}^{h(n)}\int_{S(u,t)}\ldots\int_{S(u,t)}\left (1-\frac{V(u,t)}{V}\right )^{n-2}\times\\
%&\quad\times\left |\sum_{i=0}^{d-1} (-1)^i{d-1\choose i}s_{d-i-1}(u)(1+t)^i\right |\nabla_d(u_1,\ldots,u_d)
%\D u_1\ldots \D u_d\D t \D u.
%\end{align*}
Then
\begin{align*}
\lim_{n\to\infty}&\E f_{d-1}(K_{(n)})\, n^{-\frac{d-1}{d+1}}\\
	&=\lim_{n\to\infty}n^{-\frac{d-1}{d+1}}{n\choose d}\frac{\eta_d(K)}{V^d}
\int_{S^{d-1}}\int_{0}^{h(n)}\left (1-\frac{V(u,t)}{V}\right )^{n-d}s(u,t)\psi(u,t)
\D t \D u.
\end{align*}

For $u\in S^{d-1}$ and $n\in\mathbb{N}$, introduce  
\begin{align*}
\theta_n (u)&=n^{-\frac{d-1}{d+1}}{n\choose d}\frac{\eta_d(K)}{V^d}\int_{0}^{h(n)}\left (1-\frac{V(u,t)}{V}\right )^{n-d}s(u,t)\psi(u,t)
\D t
\end{align*}
Therefore
$$\lim_{n\to\infty}\E f_{d-1}(K_{(n)})\, n^{-\frac{d-1}{d+1}}=\lim_{n\to\infty}\int_{S^{d-1}}\theta_n(u)\D u.$$
In order to be able to change the limit and the integral using Lebesgue's  dominated convergence theorem, we must show that the functions $\theta_n(u)$ are uniformly bounded. This follows from  Lemmas~\ref{betaintegral} and \ref{cap-volume1} in a quite standard way using the fact that both $s(u,t)$ and $\psi (u,t)$ are uniformly bounded. For an analogous argument, see, for example, \cite[p. 909]{FKV14}. Thus,
$$\lim_{n\to\infty}\E f_{d-1}(K_{(n)})\, n^{-\frac{d-1}{d+1}}=\int_{S^{d-1}}\lim_{n\to\infty}\theta_n(u)\D u.$$
 
Note that due to the $C^2_+$ smoothness of $\bd K$, $s(u,t)=s(u,0)+O(t)$ as $t\to 0^+$ uniformly for $u\in S^{d-1}$. Thus, for an $\varepsilon>0$, there exists $0<t_\varepsilon<t_2$ such that for all for all $u\in S^{d-1}$ and any $0<t<t_\varepsilon$ it holds that
\begin{equation}\label{ize-eps}
(1-\varepsilon) s(u,0)<s(u,t)<(1+\varepsilon)s(u,0), \text{ and}
\end{equation} 
\begin{equation}\label{vol-eps}
(1-\varepsilon)c(K,u)t^{\frac{d+1}{2}} <V(u,t)<(1+\varepsilon)c(K,u)t^{\frac{d+1}{2}},
\end{equation} 
where $c(K,u)$ is the quantity on the right-hand-side of \eqref{cap-volume1}.

For $x_0,\ldots, x_d\in B^d$, and let $\Delta_d(x_0,\ldots,x_d)$ denote the volume of the simplex with vertices $x_0,\ldots, x_d$. The quantity
$$I(d)=\int_{B^d}\ldots\int_{B^d}\Delta_d(x_0,\ldots,x_d)\D x_0\ldots \D x_d,$$
is known to be a constant depending only on $d$, which fact goes back to Busemann \cite{B53}, see also Miles \cite[(29)]{M71}. It follows by simple scaling that for $r>0$, 
$$r^{d(d+2)}I(d)=\int_{rB^d}\ldots\int_{rB^d}\Delta_d(x_0,\ldots,x_d)\D x_0\ldots \D x_d.$$

If $0<\varrho<1$ is a number such that a ball of radius $\varrho$ rolls freely in $K$, and $0<R<1$ is such that $K$ slides freely in $RB^d$, then let 
$$S_\varrho (u,t)=(S^{d-1}+x-(1+t))\cap (\varrho B^d+x-\varrho u),$$ 
$$S_R (u,t)=(S^{d-1}+x-(1+t))\cap (R B^d+x-R u).$$
Then $$S_\varrho(u,t)\subset S(u,t)\subset S_R(u,t).$$
%is the ball cap with vertex $x=\sigma_K^{-1}(u)$ and height $t$ of the unique radius $\varrho$ ball containing $x=\sigma_K^{-1}(u)$, and
%$S_R(u,t)$ is the ball cap with vertex $x$ and height $u$ of the unique radius $R$ supporting ball of $K$ at $x$. 
Let 
$$\psi_\varrho(u,t)=\int_{S_\varrho(u,t)}\ldots\int_{S_\varrho(u,t)} \nabla_d(u_1,\ldots, u_d)\D u_1\ldots \D u_d,$$ 
and let $\psi_R(u,t)$ be defined similarly on $S_R(u,t)$. 
Then
$$\psi_\varrho (u,t)\leq \psi (u,t)\leq \psi_R(u,t).$$
Using \eqref{tau} it is not difficult to see that
$$\lim_{t\to 0^+}\frac{\psi_\varrho(u,t)}{\frac 1d (\frac{2\varrho}{1-\varrho}t)^{\frac{d^2-1}{2}}I(d-1)}=1,\quad  \text{ and }\quad
\lim_{t\to 0^+}\frac{\psi_R(u,t)}{\frac 1d (\frac{{2R}}{1-R}t)^{\frac{d^2-1}{2}}I(d-1)}=1.$$
%The the orthogonal projections of $S_\varrho (u,t)$ and $S_R(u,t)$ to $u^\perp$ are $(d-1)$-dimensional balls of radius $\varrho(u,t)$ and $R(u,t)$ respectively, that satisfy 
%$$\varrho (u,t)\cdot t^{-\frac 12}=\sqrt{2\varrho} \text{ and } R(u,t)\cdot t^{-\frac 12}=\sqrt{2R} \text{ as } t\to 0^+.$$
%Therefore,
%$$\lim_{t\to 0^+}\frac{\psi_R (u,t)}{\varrho (u,t)^{d(d-1)}I(d-1)}=1,$$
Therefore, there exist positive constants $\gamma_5$ and $\gamma_6$ such that for any $u\in S^{d-1}$ it holds that for sufficiently small $t$, 
\begin{equation}\label{miles}
 \gamma_5 t^{\frac{d^2-1}{2}}\leq s(u, t)\leq \gamma_6 t^{\frac{d^2-1}{2}}.
\end{equation}
Thus, using \eqref{ize-eps}, \eqref{vol-eps}, \eqref{miles} and Lemma~\ref{betaintegral}, we obtain (disregarding the implied constants) that the following holds uniformly for $u\in S^{d-1}$:
\begin{align*}
\theta_n(u)&\ll \lim_{n\to\infty} n^{-\frac{d-1}{d+1}}{n\choose d}\frac{1}{V^d}\int_{0}^{h(n)}\left (1-\frac{c(K,u)t^{\frac{d+1}{2}}}{V}\right )^{n-d}s(u,0)t^{\frac{d^2-1}{2}}\D t\\
&\ll  n^{-\frac{d-1}{d+1}}{n\choose d}\frac{s(u,0)}{V^d}\int_{0}^{h(n)}\left (1-\frac{c(K,u)t^{\frac{d+1}{2}}}{V}\right )^{n-d}t^{\frac{d^2-1}{2}}\D t\\
&\ll  n^{-\frac{d-1}{d+1}} n^d n^{-\frac{d^2+1}{d+1}}\\
&=1.
\end{align*}
Therefore,
$$\E f_{d-1}(K_{(n)})\ll n^{-\frac{d-1}{d+1}}.$$
The lower bound can be proved similarly. This finishes the proof of Theorem~\ref{main-smooth}.

\begin{bibdiv}
\begin{biblist}

\bib{A64}{book}{
	author={Artin, Emil},
	title={The gamma function},
	series={Translated by Michael Butler. Athena Series: Selected Topics in
		Mathematics},
	publisher={Holt, Rinehart and Winston, New York-Toronto-London},
	date={1964},
	pages={vii+39},
	%review={\MR{0165148}},
}

\bib{B89}{article}{
	author={B\'{a}r\'{a}ny, Imre},
	title={Intrinsic volumes and $f$-vectors of random polytopes},
	journal={Math. Ann.},
	volume={285},
	date={1989},
	number={4},
	pages={671--699},
	issn={0025-5831},
%	review={\MR{1027765}},
%	doi={10.1007/BF01452053},
}

\bib{B2008}{article}{
	author={B\'ar\'any, Imre},
	title={Random points and lattice points in convex bodies},
	journal={Bull. Amer. Math. Soc. (N.S.)},
	volume={45},
	date={2008},
	number={3},
	pages={339--365},
	issn={0273-0979},
	%	review={\MR{2402946}},
	%	doi={10.1090/S0273-0979-08-01210-X},
}

\bib{BL88}{article}{
	author={B\'{a}r\'{a}ny, I.},
	author={Larman, D. G.},
	title={Convex bodies, economic cap coverings, random polytopes},
	journal={Mathematika},
	volume={35},
	date={1988},
	number={2},
	pages={274--291},
	issn={0025-5793},
	%review={\MR{986636}},
	%doi={10.1112/S0025579300015266},
}

\bib{BHRS17}{article}{
	author={B\'ar\'any, Imre},
	author={Hug, Daniel},
	author={Reitzner, Matthias},
	author={Schneider, Rolf},
	title={Random points in halfspheres},
	journal={Random Structures Algorithms},
	volume={50},
	date={2017},
	number={1},
	pages={3--22},
	issn={1042-9832},
	%review={\MR{3583024}},
	%doi={10.1002/rsa.20644},
}

\bib{BLNP2007}{article}{
	author={Bezdek, K\'aroly},
	author={L\'angi, Zsolt},
	author={Nasz\'odi, M\'arton},
	author={Papez, Peter},
	title={Ball-polyhedra},
	journal={Discrete Comput. Geom.},
	volume={38},
	date={2007},
	number={2},
	pages={201--230},
	issn={0179-5376},
	%review={\MR{2343304}},
	%doi={10.1007/s00454-007-1334-7},
}

\bib{BFRV09}{article}{
	author={K. J. B\"or\"oczky},
	author={F. Fodor},
	author={M. Reitzner},
	author={V. V\'igh},
	title={Mean width of random polytopes in a reasonably smooth convex body},
	journal={J. Multivariate Anal.},
	volume={100},
	date={2009},
	pages={2287--2295.},
}

\bib{B53}{article}{
	author={Busemann, Herbert},
	title={Volume in terms of concurrent cross-sections},
	journal={Pacific J. Math.},
	volume={3},
	date={1953},
	pages={1--12},
	issn={0030-8730},
	%review={\MR{55712}},
}

\bib{Efron}{article}{
	author={Efron, Bradley},
	title={The convex hull of a random set of points},
	journal={Biometrika},
	volume={52},
	date={1965},
	pages={331--343},
	issn={0006-3444},
%	review={\MR{0207004}},
%	doi={10.1093/biomet/52.3-4.331},
}

\bib{FTGF2015}{article}{
	author={Fejes T\'{o}th, G.},
	author={Fodor, F.},
	title={Dowker-type theorems for hyperconvex discs},
	journal={Period. Math. Hungar.},
	volume={70},
	date={2015},
	number={2},
	pages={131--144},
	issn={0031-5303},
%	review={\MR{3343996}},
%	doi={10.1007/s10998-014-0071-y},
}

\bib{F17}{article}{
author={Fodor, F.},
title={Random approximations of convex bodies by ball-polytopes},
conference={
	title={Discrete Geometry and Convexity -- in honour of Imre B\'ar\'any},
},
book={
	series={Alfr\'ed R\'enyi Institute of Mathematics, Hungarian Academy of Sciences},
	publisher={Typotex, Hungary},
},
date={2017},
pages={133--134},
}

\bib{FKV14}{article}{
   author={Fodor, F.},
   author={Kevei, P.},
   author={V{\'{\i}}gh, V.},
   title={On random disc polygons in smooth convex discs},
   journal={Adv. in Appl. Probab.},
   volume={46},
   date={2014},
   number={4},
   pages={899--918},
   issn={0001-8678},
   %review={\MR{3290422}},
   %doi={10.1239/aap/1418396236},
}

\bib{FV18}{article}{
	author={Fodor, Ferenc},
	author={V\'{\i}gh, Viktor},
	title={Variance estimates for random disc-polygons in smooth convex
		discs},
	journal={J. Appl. Probab.},
	volume={55},
	date={2018},
	number={4},
	pages={1143--1157},
	issn={0021-9002},
%	review={\MR{3899933}},
%	doi={10.1017/jpr.2018.76},
}

\bib{H13}{article}{
	author={Hug, Daniel},
	title={Random polytopes},
	conference={
		title={Stochastic geometry, spatial statistics and random fields},
	},
	book={
		series={Lecture Notes in Math.},
		volume={2068},
		publisher={Springer, Heidelberg},
	},
	date={2013},
	pages={205--238},
	%review={\MR{3059649}},
	%doi={10.1007/978-3-642-33305-7_7},
}

\bib{MMO19}{book}{
author={Martini, H.},
author={Montejano, L.},
author={Oliveros, D.},
title={Bodies of constant width},
publisher={Birkh\"auser},
date={2019},
}

\bib{Miles1970}{article}{
	author={Miles, R. E.},
	title={A synopsis of ``Poisson flats in Euclidean spaces''},
	language={English, with Armenian and Russian summaries},
	journal={Izv. Akad. Nauk Armjan. SSR Ser. Mat.},
	volume={5},
	date={1970},
	number={3},
	pages={263--285},
	issn={0002-3043},
	%review={\MR{0287618}},
}

\bib{M71}{article}{
   author={Miles, R. E.},
   title={Isotropic random simplices},
   journal={Advances in Appl. Probability},
   volume={3},
   date={1971},
   pages={353--382},
   issn={0001-8678},
   %review={\MR{0309164}},
}

\bib{Moller1994}{book}{
	author={M\o ller, Jesper},
	title={Lectures on random Vorono\u{\i} tessellations},
	series={Lecture Notes in Statistics},
	volume={87},
	publisher={Springer-Verlag, New York},
	date={1994},
	pages={vi+134},
	isbn={0-387-94264-5},
	%review={\MR{1295245}},
	%doi={10.1007/978-1-4612-2652-9},
}

\bib{PP17}{article}{
	author={Paouris, Grigoris},
	author={Pivovarov, Peter},
	title={Random ball-polyhedra and inequalities for intrinsic volumes},
	journal={Monatsh. Math.},
	volume={182},
	date={2017},
	number={3},
	pages={709--729},
	issn={0026-9255},
%	review={\MR{3607509}},
%	doi={10.1007/s00605-016-0961-6},
}

\bib{R05}{article}{
	author={Reitzner, Matthias},
	title={The combinatorial structure of random polytopes},
	journal={Adv. Math.},
	volume={191},
	date={2005},
	number={1},
	pages={178--208},
	issn={0001-8708},
%	review={\MR{2102847}},
%	doi={10.1016/j.aim.2004.03.006},
}

\bib{R10}{article}{
	author={Reitzner, Matthias},
	title={Random polytopes},
	conference={
		title={New perspectives in stochastic geometry},
	},
	book={
		publisher={Oxford Univ. Press, Oxford},
	},
	date={2010},
	pages={45--76},
	%review={\MR{2654675 (2011g:60026)}},
}

\bib{RS63}{article}{
	author={R\'{e}nyi, A.},
	author={Sulanke, R.},
	title={\"{U}ber die konvexe H\"{u}lle von $n$ zuf\"{a}llig gew\"{a}hlten Punkten},
	%language={German},
	journal={Z. Wahrscheinlichkeitstheorie und Verw. Gebiete},
	volume={2},
	date={1963},
	pages={75--84},
%	review={\MR{0156262}},
%	doi={10.1007/BF00535300},
}

\bib{S46}{article}{
	author={Santal\'o, L. A.},
	title={On plane hyperconvex figures},
	%language={Spanish, with English summary},
	journal={Summa Brasil. Math.},
	volume={1},
	date={1946},
	pages={221--239 (1948)},
	%review={\MR{0026346}},
}

\bib{Sch14}{book}{
   author={Schneider, Rolf},
   title={Convex bodies: the Brunn-Minkowski theory},
   %series={Encyclopedia of Mathematics and its Applications},
   volume={151},
   edition={Second expanded edition},
   publisher={Cambridge University Press, Cambridge},
   date={2014},
   %pages={xxii+736},
   %isbn={978-1-107-60101-7},
   %review={\MR{3155183}},
}

\bib{Sch04}{article}{
	author={Schneider, Rolf},
	title={Discrete aspects of stochastic geometry},
	conference={
		title={Handbook of discrete and computational geometry},
	},
	book={
		%series={CRC Press Ser. Discrete Math. Appl.},
		publisher={CRC, Boca Raton, FL},
	},
	date={1997},
	pages={167--184},
	%   review={\MR{1730165}},
}

\bib{Sch08}{article}{
	author={Schneider, Rolf},
	title={Recent results on random polytopes},
	journal={Boll. Unione Mat. Ital. (9)},
	volume={1},
	date={2008},
	number={1},
	pages={17--39},
	issn={1972-6724},
	%   review={\MR{2387995 (2008m:52011)}},
}

\bib{SW08}{book}{
   author={Schneider, Rolf},
   author={Weil, Wolfgang},
   title={Stochastic and integral geometry},
   series={Probability and its Applications (New York)},
   publisher={Springer-Verlag, Berlin},
   date={2008},
  %pages={xii+693},
   %isbn={978-3-540-78858-4},
   %review={\MR{2455326}},
   %doi={10.1007/978-3-540-78859-1},
}

\bib{WW1993}{article}{
	author={Weil, Wolfgang},
	author={Wieacker, John A.},
	title={Stochastic geometry},
	conference={
		title={Handbook of convex geometry, Vol.\ A, B},
	},
	book={
		publisher={North-Holland, Amsterdam},
	},
	date={1993},
	%	pages={1391--1438},
	%	review={\MR{1243013}},
}

\bib{Wie}{article}{
	author={Wieacker, John A.},
	title={Einige Probleme der polyedrischen Approximation},
	journal={Diplomarbeit, Feriburg i. Br.},
	date={1978},
}

\end{biblist}
\end{bibdiv}

\end{document}